\newtheorem{thm}{Theorem}[section]
\newtheorem{cor}[thm]{Corollary}
\newtheorem{lem}[thm]{Lemma}
\newtheorem{prop}[thm]{Proposition}
\theoremstyle{remark}
\newtheorem{rem}[thm]{Remark}
\theoremstyle{definition}
\newtheorem{eg}[thm]{Example}
\newtheorem{defn}[thm]{Definition}
\newcommand{\grp}[1]{\langle #1 \rangle}				
\newcommand{\op}[2]{\mathrm{O}_{#1}(#2)}	
\newcommand{\ba}[1]{\overline{#1}}
\DeclareMathOperator{\Aut}{Aut}
\DeclareMathOperator{\Cay}{Cay}
\DeclareMathOperator{\Dih}{Dih}
\DeclareMathOperator{\Dic}{Dic}
\DeclareMathOperator{\Z}{Z}
\newcommand{\C}{\mathrm{C}}
\newcommand{\D}{\mathrm{D}}
\newcommand{\Q}{\mathrm{Q}}
\newcommand{\F}{\mathrm{F}}
\newcommand{\K}{\mathrm{K}}
\newcommand{\ZZ}{\mathbb{Z}}
\newcommand{\LL}{\mathcal{L}}
\newcommand{\soc}{\mathrm{soc}}
\newcommand{\AGL}{\mathrm{AGL}}
\newcommand{\PGL}{\mathrm{PGL}}
\newcommand{\PSL}{\mathrm{PSL}}
\renewcommand{\geq}{\geqslant}
\renewcommand{\leq}{\leqslant}
\renewcommand{\le}{\leqslant}
\title[CCA Sylow cyclic groups]{Characterising CCA Sylow cyclic groups whose order is not divisible by four}
\author{Luke Morgan}
\author{Joy Morris}
\author{Gabriel Verret}
\address{Luke Morgan and Gabriel Verret$^*$\\
Centre for the Mathematics of Symmetry and Computation, School of Mathematics and Statistics (M019)\\
The University of Western Australia\\
Crawley, 6009\\
Australia} 
\email{luke.morgan@uwa.edu.au}
\address{Joy Morris\\
Department of Mathematics and Computer Science\\
University of Lethbridge\\
Lethbridge, AB T1K 3M4\\
Canada}
\email[corresponding author]{joy.morris@uleth.ca}
\address{$*$ Current address: Department of Mathematics, The University of Auckland, Private Bag 92019, Auckland 1142, New Zealand.}
\email{g.verret@auckland.ac.nz}
\thanks{This research was supported by the Australian Research Council grants DE130101001 and DE160100081, by the Natural Science and Engineering Research Council of Canada, and by the Cheryl E. Praeger Fellowship from The University of Western Australia.}
\subjclass[2010]{Primary 05C25}
\keywords{CCA problem, Cayley graphs, edge-colouring, Sylow cyclic groups}
\begin{document}

\begin{abstract}
A Cayley graph on a group $G$  has a natural edge-colouring. We say that such a graph is \emph{CCA} if every automorphism of the graph that preserves this edge-colouring is an element of the  normaliser of the  regular representation of $G$. A group $G$ is then said to be \emph{CCA} if every connected Cayley graph on $G$ is CCA. 

Our main result is a characterisation of non-CCA graphs on groups that are Sylow cyclic and whose order is not divisible by four. We also provide several new constructions of non-CCA graphs.

\end{abstract}

\maketitle

\section{Introduction}
All groups and all graphs in this paper are finite. Let $G$ be a group and let $S$ be an inverse-closed subset of $G$. The \emph{Cayley graph} of $G$ with respect to $S$ is the edge-coloured graph $\Cay(G,S)$ with vertex-set $G$ and, for every $g\in G$ and $s\in S$, an edge $\{g,sg\}$ with colour $\{s,s^{-1}\}$.  Its  group of colour-preserving automorphisms is    denoted  $\Aut_c(\Cay(G,S))$. Let $\Aut_{\pm 1}(G,S)=\{\alpha \in \Aut(G) \colon s^\alpha \in \{s,s^{-1}\}$ for all $s \in S \}$. It is easy to see that $G_R \rtimes\Aut_{\pm 1}(G,S)\leq \Aut_c(\Cay(G,S))$, where $G_R$ is the right-regular representation of $G$.

\begin{defn}[\cite{FirstCCA}]
The Cayley graph $\Cay(G,S)$ is \emph{CCA} (Cayley colour automorphism) if $\Aut_c(\Cay(G,S))=G_R \rtimes\Aut_{\pm 1}(G,S)$. The group $G$ is  \emph{CCA} if every connected Cayley graph on $G$ is CCA.
\end{defn}

In other words,  a Cayley graph  is CCA  if and only if the colour-preserving graph automorphisms are exactly the ``obvious'' ones. The terminology we use for this problem largely comes from \cite{FirstCCA}. Other papers that study this problem include \cite{CompleteCCA,CCASquarefree,VTCCA,CirculantCCA}.

Note that $\Cay(G,S)$ is connected if and only if $S$ generates $G$. It is also easy to see that $G_R \rtimes\Aut_{\pm1}(G,S)$ is precisely the normaliser of $G_R$ in $\Aut_c(\Cay(G,S))$. In particular, $\Cay(G,S)$ is CCA if and only if $G_R$ is normal in $\Aut_c(\Cay(G,S))$, c.f. \cite[Remark 6.2]{FirstCCA}.

In Section~\ref{prelim}, we introduce some basic terminology and recall a few previous results on the CCA property. 
In Section~\ref{sec:wr}, we consider wreath products of permutation groups, and produce  conditions that are sufficient to determine when such a product is a non-CCA group. This generalises results from~\cite{FirstCCA}. In Section~\ref{sec:constr}, we give some new constructions for non-CCA graphs.

Finally, in Section~\ref{sec:sylow cyclic}, we obtain a characterisation of non-CCA groups whose order is not divisible by four, in which every Sylow subgroup is cyclic. This generalises the work of \cite{CCASquarefree}, which dealt with the case of groups of odd squarefree order.

\section{Preliminaries}\label{prelim}

The identity of a group $G$ is denoted $1_G$, or simply $1$ if there is no risk of confusion.  We denote a dihedral group of order $2n$ by $\D_n$, while $\Q_8$ denotes the quaternion group of order $8$ with elements $\{\pm 1,\pm i, \pm j,\pm k\}$ and multiplication defined as usual.

We now state some preliminary results and introduce some terminology related to Cayley graphs. Let $\Gamma$ be a graph and let $v$ be a vertex of $\Gamma$. The neighbourhood of $v$ is denoted by $\Gamma(v)$. If $A$ is a group of automorphisms of $\Gamma$, then the permutation group induced by the vertex-stabiliser $A_v$ on the neighbourhood of $v$ is denoted $A_v^{\Gamma(v)}$.

\begin{lem}[{\cite[Lemma~6.3]{FirstCCA}}]\label{VxStab2Group}
The vertex-stabiliser in the colour-preserving group of automorphisms of a connected Cayley graph is a $2$-group.
\end{lem}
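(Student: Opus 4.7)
The plan is to show that every element of the vertex-stabiliser has $2$-power order; since the group is finite, Cauchy's theorem then forces it to be a $2$-group. Fix $v$ a vertex of $\Gamma = \Cay(G,S)$ and $\alpha \in A_v$, where $A = \Aut_c(\Gamma)$.

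The first step is to understand the local action. The edges at $v$ of colour $\{s,s^{-1}\}$ are precisely those to $sv$ and $s^{-1}v$, which constitute at most two edges (exactly one if $s = s^{-1}$). Since $\alpha$ fixes $v$ and preserves colours, it must permute this pair, so $\alpha(sv) \in \{sv, s^{-1}v\}$. Hence $A_v^{\Gamma(v)}$ embeds into a direct product of groups of order $2$, one for each colour class at $v$, and is therefore an elementary abelian $2$-group. In particular, $\alpha^{2}$ fixes every neighbour of $v$ pointwise. Crucially, the same argument works at any fixed vertex: if $u$ is fixed by some $\beta \in A$, then $\beta^{\Gamma(u)}$ has order dividing $2$.

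The second step is an inductive bootstrap along the graph. Writing $N_k(v)$ for the ball of radius $k$ around $v$, I claim that $\alpha^{2^k}$ fixes $N_k(v)$ pointwise. The base case $k=1$ is the previous paragraph. For the induction step, assume $\alpha^{2^k}$ fixes $N_k(v)$ pointwise, and let $u \in N_k(v)$. Applying the local observation to the automorphism $\alpha^{2^k}$ at the vertex $u$, we see that $(\alpha^{2^k})^2 = \alpha^{2^{k+1}}$ fixes every neighbour of $u$ pointwise. Ranging over all $u \in N_k(v)$ covers $N_{k+1}(v)$, establishing the claim.

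Finally, since $\Gamma$ is connected and finite, $N_k(v)$ is the whole vertex-set once $k$ is at least the diameter of $\Gamma$, so $\alpha^{2^k} = 1$ for such $k$. Thus every element of $A_v$ has order a power of $2$, and $A_v$ is a $2$-group. The only conceptual point that needs care is the local step: without the colour-preserving hypothesis, the action on $\Gamma(v)$ could be arbitrary, but colour-preservation cuts each orbit down to size at most $2$, and this is what drives the whole argument.
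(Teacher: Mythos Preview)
Your argument is correct. The paper does not supply its own proof of this lemma; it simply cites \cite[Lemma~6.3]{FirstCCA}. Your proof is precisely the standard one and matches what is used in the cited reference: the colour-preserving hypothesis forces the local action $A_v^{\Gamma(v)}$ to be an elementary abelian $2$-group (indeed, the paper itself invokes this fact inside the proof of Lemma~\ref{newnewyep}), and then the usual connectivity induction shows $\alpha^{2^k}$ fixes the $k$-ball, so every element of $A_v$ has $2$-power order.
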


\begin{defn}
Let $G$ be a group, let $\Gamma=\Cay(G,S)$ and let $N$ be a normal subgroup of $G$. The \emph{quotient graph} $\Gamma/N$ is $\Cay(G/N,S/N)$, where $S/N=\{sN\colon s\in S\}$.
\end{defn}

\begin{lem}[{\cite[Lemma~3.4]{CCASquarefree}}]\label{QuotientGraph}
Let $A$ be a colour-preserving group of automorphisms of $\Cay(G,S)$, let $N$ be a normal subgroup of $A$ and let $K$ be the kernel of the action of $A$ on the $N$-orbits. If $N\leq G$, then $A/K$ is a colour-preserving group of automorphisms of $\Gamma/N$.
\end{lem}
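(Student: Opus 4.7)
The plan is to first verify that $A$ permutes the set of $N$-orbits on $V(\Gamma) = G$ (which, since $N \le G_R$ acts semiregularly, is exactly $G/N$ and hence the vertex set of $\Gamma/N$), and then to show that the induced action of $A/K$ preserves both the edges and the colours of $\Gamma/N$.

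For the first step, I would take $a \in A$ and $g \in G$ and check that $(gN)^a = g^a N$: writing $a^{-1} n a = n'$ (which lies in $N$ by normality of $N$ in $A$) gives $(gn)^a = g^a n'$ for each $n \in N$, hence $(gN)^a \subseteq g^a N$, with equality by cardinality. This yields a well-defined action of $A$ on the set of $N$-orbits whose kernel is $K$ by definition, so $A/K$ acts faithfully on $V(\Gamma/N)$. For the second step, I take an edge $\{g, sg\}$ of $\Gamma$, which carries colour $\{s,s^{-1}\}$. Since $a$ is colour-preserving, its image $\{g^a, (sg)^a\}$ has the same colour, which forces $(sg)^a = s^{\varepsilon} g^a$ for some $\varepsilon \in \{\pm 1\}$. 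Passing to $N$-orbits, the edge $\{gN, sgN\}$ of $\Gamma/N$, which carries colour $\{sN, (sN)^{-1}\}$, is sent by $aK$ to $\{g^a N, s^\varepsilon g^a N\}$, an edge of $\Gamma/N$ of the same colour. This simultaneously verifies that $A/K$ acts by graph automorphisms and that this action is colour-preserving.

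I do not anticipate a serious obstacle; the argument is largely bookkeeping. The two ingredients that do the real work are the normality of $N$ in $A$, which allows the action to descend to $N$-orbits, and the colour-preserving hypothesis on $A$, which is exactly what makes the induced action colour-preserving. The only minor point to keep in mind is the case $s \in N$, where $sN$ is trivial in $G/N$ and the corresponding edge collapses to a loop in $\Gamma/N$; this causes no real trouble.
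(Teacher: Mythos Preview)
Your argument is correct and is the natural one. Note, however, that the paper does not supply its own proof of this lemma: it is stated with a citation to \cite[Lemma~3.4]{CCASquarefree} and no argument is given here. So there is nothing to compare against in the present paper; your write-up would serve as a self-contained substitute for the reference, and the steps you give (normality of $N$ in $A$ to descend to $N$-orbits, then $(sg)^a=s^{\pm1}g^a$ to carry the colour through the quotient) are exactly the expected ones.
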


\begin{lem}\label{coolnew}
Let $\Gamma=\Cay(G,S)$, let $A$ be a colour-preserving group of automorphisms of $\Gamma$, let $N$ be a normal $2$-subgroup of $A$ and let $K$ be the kernel of the action of $A$ on the $N$-orbits. If $K_v\neq 1$, then $S$ contains an element whose order is a power of $2$ that is at least $4$.
\end{lem}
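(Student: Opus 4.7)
The vertex-stabiliser $A_v$ is a $2$-group by Lemma~\ref{VxStab2Group}, so $K_v \le A_v$ is a nontrivial $2$-group; the plan is to pick $k \in K_v \setminus \{1\}$, show that $k$ must act non-trivially on $\Gamma(v)$ to produce some $s \in S$ with $k(sv) = s^{-1}v$, and then exploit the block structure arising from $N$-orbits—whose block through $v$ is forced to be a subgroup of $G$ by the regularity of $G_R$—to conclude that $|s|$ is a $2$-power of size at least~$4$.

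The main subtlety, which I would tackle first, is showing that $k$ must act non-trivially on $\Gamma(v)$. Here I would argue by contradiction. Since $K$ is normal in $A$ and $A$ is transitive, the stabilisers $K_u$ are all $A$-conjugate; conjugation transports the action of $K_v$ on $\Gamma(v)$ to the action of $K_u$ on $\Gamma(u)$, so if $K_v$ were trivial on $\Gamma(v)$ then $K_u$ would be trivial on $\Gamma(u)$ for every vertex $u$. Under this assumption $k$ would fix $v$ and every neighbour $u$; for each such $u$ we would have $k \in K_u$, so $k$ would fix every element of $\Gamma(u)$ as well; iterating via the connectedness of $\Gamma$ would yield $k = 1$, contradicting $k \neq 1$. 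So some $k \in K_v$ moves some $sv$, and colour-preservation then forces $k(sv) = s^{-1}v$ with $s \neq s^{-1}$, in particular $|s| \ge 3$.

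For the final step I would use that $N \trianglelefteq A$ makes the $N$-orbits a block system for the transitive action of $A$. Identifying vertices with $G$ via $G_R$ and taking $v = 1_G$, a block through the identity of a regular group action is always a subgroup; hence $H := vN$ is a subgroup of $G$, the remaining $N$-orbits are its right cosets, and $|H|$ divides $|N|$ and is therefore a power of~$2$. Since $k \in K$, the blocks $Hs$ (containing $sv$) and $Hs^{-1}$ (containing $s^{-1}v$) must coincide. If $s \in H$, then $\langle s \rangle \le H$, so $|s|$ divides the $2$-power $|H|$ and is itself a power of~$2$; combined with $|s| \ge 3$, this gives $|s| \ge 4$. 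If instead $s \notin H$, the equality $Hs = Hs^{-1}$ forces $s^2 \in H$, so $\langle s^2 \rangle \le H$ and $|s^2|$ is a power of~$2$; the case $|s|$ odd would make $|s^2| = |s|$ an odd $2$-power, contradicting $|s| \ge 3$, so $|s|$ is even and $|s| = 2|s^2|$ is again a power of~$2$ of size at least~$4$. Either way, $s$ is the required element of $S$.
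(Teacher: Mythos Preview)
Your argument is correct under the tacit hypothesis $G_R\le A$ (you use it when asserting that $A$ is transitive, and again when identifying $H=1_G^N$ as a subgroup of $G$, which requires the $N$-orbits to be $G_R$-blocks; the lemma as printed omits this, but every application in the paper has it, and the paper's own proof also silently needs $A$ transitive for the step ``$K_{u_1}\neq K_{v_0}$ hence there exists $\ell\in K_{u_1}$ with $v_0^\ell\neq v_0$'', which relies on $|K_{u_1}|=|K_{v_0}|$). The method, however, is genuinely different from the paper's. There one builds a monochromatic cycle $C=(u_0,v_0,u_1,v_1,\ldots)$ by alternately applying elements of the stabilisers $K_{v_i}$ and $K_{u_i}$, then intersects the two blocks $C$ and $v_0^N$ to see that $|C|$ or $|C|/2$ divides the $2$-power $|N|$, forcing $|C|\ge 4$. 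You bypass the cycle construction entirely: a single $k\in K_v$ places $s$ and $s^{-1}$ in the same $N$-orbit, and recognising that orbit as a coset of the $2$-subgroup $H$ immediately gives $s^2\in H$, whence $|s|$ is a $2$-power. Your route is shorter and more transparent; the paper's block-intersection argument has the mild advantage that it operates purely inside the $N$-action on its own orbit and so needs only transitivity of $A$ rather than the stronger $G_R\le A$.
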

\begin{proof}
Let $v_0=v$. Since $K_{v_0}\neq 1$, $K_{v_0}^{\Gamma(v_0)}\neq 1$ and there exists $k\in K_{v_0}$ and a neighbour $u_0$ of $v_0$ such that $u_0^k \neq u_0$. Let $u_1=u_0^k$. Note that $K_{u_1}\neq K_{v_0}$ hence there exists $\ell \in K_{u_1}$ such that  $v_0^\ell \neq v_0$. Let $v_1=v_0^\ell$. Repeating this process, we get a monochromatic cycle $C=(u_0,v_0,u_1,v_1,...)$  of length at least $3$. By construction, $u_i\in u_0^K=u_0^N$ and $v_i\in v_0^K=v_0^N$ for all $i$. In particular,  $|C\cap v_0^N|\in \{|C|,|C|/2\}$. Since each vertex of $\Gamma$ lies in a unique monochromatic cycle  of a given colour, $C$ is a block for $A$. On the other hand, $v_0^N$ is also a block for $A$ and thus so is $C\cap v_0^N$. It follows that $|C\cap v_0^N|$ divides $|N|$ which is a power of $2$. This implies that $|C|$ is also a power of $2$. Since $|C|\geq 3$, $|C|$ is divisible by $4$ and the result follows from the fact that $C$ is monochromatic.
\end{proof}

For a group $H$, let $H^2 :=\langle x^2 \mid x\in H \rangle$. 
The following lemma is inspired by an argument contained within \cite[Theorem 6.8]{FirstCCA}.

\begin{lem}\label{newnewyep}
Let $\Gamma=\Cay(G,S)$ be connected, let $A$ be a colour-preserving group of automorphisms of $\Gamma$ that is normalised by $G$ and let $v$ be a vertex of $\Gamma$. If $A_v$ has a subgroup $U$ such that $U\leq (A_v)^2$ and no other subgroup of $A_v$ is isomorphic to $U$, then $U=1$. In particular, $A_v$ is isomorphic to neither $\ZZ_{2^n}$ for $n \geqslant 2$ nor isomorphic to $\D_{2^n}$ for  $n\geqslant 3$.
\end{lem}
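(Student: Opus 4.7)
The plan is to exploit two facts: that the colour-preserving action $A_v^{\Gamma(v)}$ has exponent $2$, so $(A_v)^2$ and hence $U$ fixes the neighbourhood of $v$ pointwise; and that $U$, being the unique subgroup of $A_v$ of its isomorphism type, can be tracked through the conjugation action of $G_R$. Together these will yield that $G_R$ normalises $U$, and a short connectedness argument then forces $U=1$.

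First I would verify that $A_v^{\Gamma(v)}$ is elementary abelian: for each $s\in S$ the set $\{sv,s^{-1}v\}\cap\Gamma(v)$ has at most two elements and is preserved by $A_v$, so the image of $A_v$ in $\Sym(\Gamma(v))$ embeds into a direct product of groups of order at most $2$. Hence $(A_v)^2$ acts trivially on $\Gamma(v)$, so $U$ fixes each $u\in\Gamma(v)$ and $U\le A_u$.

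Next, because $G$ normalises $A$, conjugation by $g\in G_R$ is an isomorphism $A_v\to A_{v^g}$ sending the unique subgroup of $A_v$ isomorphic to $U$ to the unique such subgroup of $A_{v^g}$; that is, $U$ to $U^g$. Taking $v=1$ and any $s\in S$, the element $s_R$ satisfies $v^{s_R}=s\in\Gamma(v)$, and $U\le A_s$ is a subgroup isomorphic to $U$, so uniqueness forces $U=U^{s_R}$; thus $s_R$ normalises $U$. Since $\Gamma$ is connected, $\langle S\rangle=G$, so $G_R$ normalises $U$. Now every vertex has the form $v^g$ with $g\in G_R$, and for $u\in U$ we have $gug^{-1}\in U\le A_v$, giving $(v^g)^u=v^{gug^{-1}\cdot g}=v^g$; so $U$ fixes every vertex and $U=1$.

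The ``in particular'' statement follows by taking $U=(A_v)^2$: in both cases this is a nontrivial cyclic subgroup which is the unique subgroup of $A_v$ of its isomorphism type. This is immediate from cyclicity when $A_v\cong\ZZ_{2^n}$ ($n\ge 2$), and when $A_v\cong\D_{2^n}$ ($n\ge 3$) it follows because $(A_v)^2$ is cyclic of order $2^{n-1}\ge 4$ and any cyclic subgroup of $\D_{2^n}$ of order at least $3$ lies inside the unique cyclic subgroup of index $2$. I expect the main hurdle to be recognising the elementary abelian structure of $A_v^{\Gamma(v)}$; once that is in hand, the rest is essentially bookkeeping.
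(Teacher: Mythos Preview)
Your proof is correct and follows essentially the same approach as the paper: both arguments show that $A_v^{\Gamma(v)}$ is elementary abelian so that $U$ fixes $\Gamma(v)$ pointwise, then use the uniqueness of $U$ together with conjugation by generators $s_R$ to conclude that $G_R$ normalises $U$, whence $U=1$ by transitivity. The only cosmetic difference is that the paper phrases the uniqueness step as $U^{s}\le A_v$ forcing $U^{s}=U$, whereas you phrase it as $U\le A_s$ and $U^{s_R}\le A_s$ forcing equality there; these are the same argument viewed from the two endpoints of the edge $\{v,sv\}$.
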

\begin{proof}
Since $A$ is colour-preserving, $A_v^{\Gamma(v)}$
  is an elementary abelian $2$-group.
 Since $U\leq (A_v)^2$, it follows that $U$ fixes all the neighbours of $v$.
Let $s\in S$. Since $A$ is normalised by $G$, we have $U^s\leq A$ and, by the previous  observation, $U^s\leq A_v$. As $A_v$ has a unique subgroup isomorphic to $U$, we must have $U=U^s$. Since this holds for every $s\in S$, $U$ is normalised by $G$. As $G$ is transitive and $U$ fixes $v$, this implies that $U=1$.

The second part of the lemma follows from the first. 
Indeed, if $A_v$ is isomorphic to $\ZZ_{2^n}$ for $n\geqslant 2$ or to $\D_{2^n}$  for $n\geqslant 3$, then $(A_v)^2$ is non-trivial and is the unique cyclic subgroup of its order.
\end{proof}

\section{Wreath products}\label{sec:wr}

\begin{prop}\label{wreath}
Let $H$ be a permutation group on a set $\Omega$, let $G$ be a group and let $X=G \wr_{\Omega} H$. If 
\begin{itemize}
\item there is an inverse-closed generating set $S$ for $G$ and a non-identity bijection $\tau: G \rightarrow G$ such that $\tau$ fixes  $1$, and $\tau(sg)=s^{\pm 1}\tau(g)$ for every $g \in G$ and  every $s \in S$, and
\item either $H$ is nontrivial or $\tau\not\in \Aut(G)$, 
\end{itemize}then $X$ is non-CCA.
\end{prop}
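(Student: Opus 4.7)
The plan is to build a connected Cayley graph on $X$ carrying a colour-preserving automorphism $\sigma$ which does not belong to $X_R\rtimes \Aut_{\pm 1}(X,T)$. For $\omega \in \Omega$ and $g \in G$, write $\iota_\omega(g) \in G^\Omega$ for the function taking value $g$ at $\omega$ and $1_G$ elsewhere. Pick $\omega_0 \in \Omega$, chosen in a nontrivial $H$-orbit whenever $H$ is nontrivial, and let $S_H$ be an inverse-closed generating set of $H$. Set
\[
T=\{(\iota_{\omega_0}(s),1_H):s\in S\}\cup\{(1_{G^\Omega},h):h\in S_H\},
\]
augmented by $(\iota_\omega(s),1_H)$ for representatives of the remaining $H$-orbits when $H$ is intransitive. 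Then $T$ is inverse-closed and generates $X$.

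I would define $\sigma:X\to X$ by $\sigma(f,h)=(f',h)$, where $f'$ agrees with $f$ at every coordinate except that $f'(h\cdot\omega_0):=\tau(f(h\cdot\omega_0))$; so $\sigma$ applies $\tau$ at a single coordinate of $f$, and this coordinate varies with $h$. Colour-preservation splits into two cases. For a $(\iota_{\omega_0}(s),1_H)$-edge, left-multiplication adjusts the $\omega_0$-coordinate of $f$: if $h$ fixes $\omega_0$ then the active coordinate of $\sigma$ is also $\omega_0$ and the identity $\tau(sg)=s^{\pm 1}\tau(g)$ handles the interaction, otherwise the two operations act at different coordinates and commute. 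For an $S_H$-edge, left-multiplication by $(1_{G^\Omega},h_0)$ permutes coordinates, and by construction the active coordinate satisfies $(h_0 h)\cdot\omega_0 = h_0\cdot(h\cdot\omega_0)$, so $\sigma$ commutes with this shift. The extra orbit-representative generators fall into the first case since their coordinate lies outside the $H$-orbit of $\omega_0$.

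Because $\sigma(1_X)=1_X$, an element of $X_R\rtimes \Aut_{\pm 1}(X,T)$ equal to $\sigma$ would have to lie in $\Aut_{\pm 1}(X,T)\le\Aut(X)$, so it suffices to produce $x_1,x_2\in X$ with $\sigma(x_1x_2)\ne \sigma(x_1)\sigma(x_2)$. If $\tau\notin \Aut(G)$, pick $a,b\in G$ with $\tau(ab)\ne\tau(a)\tau(b)$ and compare $\sigma$ of $(\iota_{\omega_0}(a),1_H)(\iota_{\omega_0}(b),1_H)$ with the product of the individual $\sigma$-values at coordinate $\omega_0$. Otherwise $H$ is nontrivial: choose $h_0\in H$ with $h_0\cdot\omega_0\ne\omega_0$ and $a\in G$ with $\tau(a)\ne a$ (such $a$ exists since $\tau$ is a nonidentity bijection fixing $1$), and set $x_1=(\iota_{\omega_0}(a),1_H)$, $x_2=(1_{G^\Omega},h_0)$. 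Then $\sigma(x_1x_2)=x_1x_2$, because in the $h_0$-block the active coordinate $h_0\cdot\omega_0$ is one at which $\iota_{\omega_0}(a)$ vanishes, whereas $\sigma(x_1)\sigma(x_2)=(\iota_{\omega_0}(\tau(a)),h_0)\ne x_1x_2$.

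The main obstacle, as I see it, is arranging $\sigma$ to be colour-preserving on both the within-block $\iota_{\omega_0}(S)$-edges and the block-shifting $S_H$-edges simultaneously: the first type requires the active coordinate to be fixed at $\omega_0$ when $h$ fixes $\omega_0$, while the second requires the active coordinate to be carried along by the coordinate permutation induced by $H$. Letting the active coordinate track the $H$-orbit of $\omega_0$ via $h\mapsto h\cdot\omega_0$ reconciles these requirements, while at the same time introducing exactly the failure of multiplicativity needed when $H$ acts nontrivially on that orbit.
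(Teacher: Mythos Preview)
Your argument is correct and, once the bookkeeping is untangled, coincides with the paper's. The paper writes elements of $X$ as $hg$ with $g\in G^{\Omega}$ and defines $\tau'(hg_1g_2\cdots g_m)=h\,\tau(g_1)g_2\cdots g_m$, i.e.\ it applies $\tau$ at the \emph{fixed} coordinate $1$; your $\sigma$ writes elements as $(f,h)=fh$ and applies $\tau$ at the \emph{moving} coordinate $h\cdot\omega_0$. But if you convert $fh$ to $hg$ via $g=h^{-1}fh$, then $g(\omega_0)=f(h\cdot\omega_0)$, so modifying $f$ at $h\cdot\omega_0$ is exactly modifying $g$ at $\omega_0$: your $\sigma$ and the paper's $\tau'$ are the same permutation of $X$, described in two coordinate systems. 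The ``tension'' you identify in your last paragraph is thus an artefact of the $(f,h)$-parametrisation rather than a genuine obstacle. The only substantive difference is cosmetic: the paper uses the full connection set $(H\setminus\{1\})\cup S_1\cup\cdots\cup S_m$, whereas you use a sparser one (a generating set for $H$ and one copy of $S$ per $H$-orbit), which works equally well.
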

\begin{proof}
Let $m=|\Omega|$ and write $\Omega=\{1,\ldots, m\}$ such that, if $H$ is nontrivial, then $1$ is not fixed by $H$. 
 
Write $X=H \ltimes (G_{1}\times\cdots\times G_{m})$. Note that, if $g\in G_{i}$ and $h\in H$, then $g^h \in G_{{i}^h}$. Without loss of generality, we may assume that $1_G\notin S$. Let $S_i$ be the subset of $G_{i}$ corresponding to $S$, let $T=(H-\{1_H\}) \cup S_1 \cup \cdots \cup S_m$ and let $\Gamma=\Cay(X,T)$. Note that $T$ generates $X$ hence $\Gamma$ is connected. We will show that $\Gamma$ is non-CCA.

Define $\tau':X \to X$ by $\tau' : hg_1g_2\cdots g_m \mapsto h \tau(g_1)g_2 \cdots g_m$, where $g_i\in G_{i}$ and $h\in H$. Let $v$ be a vertex of $\Gamma$ and let $s\in T$. We will show that $\tau'(sv)=s^{\pm 1} \tau'(v)$ and hence $\tau'$ is a colour-preserving automorphism of $\Gamma$.  Write $v = hg_1\cdots g_m$ with  $h \in H$ and $g_i\in G_{i}$. Let $g=g_1\cdots g_m$. Note that $\tau'(v) =\tau'(hg)= h\tau'(g)$. If $s\in H$, then $\tau'(sv)=\tau'(shg) = sh\tau'(g)=s\tau'(v)$. Suppose now that $s\in S_i$ for some $i\in\Omega$. If $i^h\neq 1$, then 
$$\tau'(s^hg) = \tau'(g_1\cdots s^h g_{i^h} \cdots g_m)=\tau(g_1) g_2\cdots s^h g_{i^h} \cdots g_m=s^h\tau(g_1) g_2 \cdots g_m=s^h\tau'(g).$$ 
If $i^h=1$, then  $s^h\in S_1\subseteq T$  and
$$\tau'(s^hg) = \tau'(s^hg_1 \cdots g_m)=\tau(s^hg_1) g_2\cdots  g_m=(s^h)^{\pm 1}\tau(g_1) g_2 \cdots g_m=(s^h)^{\pm 1}\tau'(g).$$ 
Either way, we have
$$\tau'(sv) = \tau'(hs^hg) = h\tau'(s^hg) = h (s^h)^{\pm1}\tau'(g) = s^{\pm1} h \tau'(g) = s^{\pm1} \tau'(hg)=s^{\pm1}\tau'(v).$$ This completes the proof that  $\tau'$ is a colour-preserving automorphism of $\Gamma$. It remains to show that $\tau'$ is not a group automorphism of $X$.  (Note that $\tau'$ fixes $1_X$, so if $\tau' \in X_R \rtimes\Aut_{\pm1}(X,T)$, then $\tau' \in \Aut(X)$.)

If $H$ is nontrivial, then, since $1$ is not fixed by $H$, there exists $h\in H$ such that $1^{h}\neq 1$. Let $g$ be an element of $G_1$  that is not fixed by $\tau$. We have $\tau'(gh) = \tau'(hg^h)  = h g^h=gh$ but $\tau'(g)\tau'(h) = \tau(g) h$. Since $g\neq \tau(g)$, $\tau'$ is not an automorphism of $X$.

If $H$ is trivial and $\tau\not\in \Aut(G)$, then there exist $g_1, g_2 \in G$ such that $\tau(g_1g_2)\neq \tau(g_1)\tau(g_2)$. Applying $\tau'$ to the corresponding elements of $G_1$ shows that $\tau'$ is not an  automorphism of $X$. This completes the proof.
\end{proof}

We now obtain a few corollaries of Proposition~\ref{wreath}.

\begin{cor}
Let $H$ be a permutation group on a set $\Omega$ and let $G$ be a group.
If $G$ is non-CCA, then $G\wr_\Omega H$ is non-CCA.
\end{cor}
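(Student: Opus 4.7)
The plan is to invoke Proposition~\ref{wreath}, producing from the non-CCA property of $G$ a bijection $\tau$ of $G$ meeting its hypotheses.

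Since $G$ is non-CCA, fix a connected Cayley graph $\Cay(G,S)$ and a colour-preserving automorphism $\sigma$ of $\Cay(G,S)$ with $\sigma \notin G_R \rtimes \Aut_{\pm 1}(G,S)$. Composing $\sigma$ with a suitable element of $G_R$, I may assume $\sigma$ fixes $1_G$; call the resulting map $\tau$. This $\tau$ is still colour-preserving, still lies outside $G_R \rtimes \Aut_{\pm 1}(G,S)$ (since membership in that subgroup is preserved under multiplication by elements of $G_R$), and is nonidentity because the identity belongs to $\Aut_{\pm 1}(G,S)$.

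Colour-preservation of $\tau$ translates directly into $\tau(sg) = s^{\pm 1}\tau(g)$ for every $g \in G$ and $s \in S$, which is exactly the first hypothesis of Proposition~\ref{wreath}. For the second hypothesis, there is nothing to check if $H$ is nontrivial. If $H$ is trivial, I must show $\tau \notin \Aut(G)$: supposing otherwise and substituting $g = 1_G$ into the previous relation gives $\tau(s) \in \{s,s^{-1}\}$ for every $s \in S$, so $\tau \in \Aut_{\pm 1}(G,S)$, contradicting the choice of $\tau$. Hence Proposition~\ref{wreath} applies and $G \wr_\Omega H$ is non-CCA.

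The argument is essentially bookkeeping. The only points to verify carefully are that normalising $\sigma$ to fix $1_G$ preserves the obstruction to lying in $G_R \rtimes \Aut_{\pm 1}(G,S)$, and the observation that a colour-preserving bijection fixing $1_G$ which is simultaneously a group automorphism must already be in $\Aut_{\pm 1}(G,S)$; once these are noted, the hypotheses of Proposition~\ref{wreath} fall into place.
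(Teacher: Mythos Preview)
Your proof is correct and follows essentially the same route as the paper's: both obtain from the non-CCA hypothesis a colour-preserving $\tau$ fixing $1_G$ but outside $G_R\rtimes\Aut_{\pm1}(G,S)$, note that colour-preservation gives $\tau(sg)=s^{\pm1}\tau(g)$, check $\tau\notin\Aut(G)$, and invoke Proposition~\ref{wreath}. Your version is slightly more explicit (normalising via $G_R$, and deducing $\tau\in\Aut_{\pm1}(G,S)$ from $\tau\in\Aut(G)$ by evaluating at $g=1_G$), but the argument is the same.
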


\begin{proof}
Since $G$ is non-CCA, there exists a colour-preserving graph automorphism $\tau$ of a Cayley graph $\Cay(G,S)$ such that $\tau(1_G)=1_G$ but $\tau$ does not normalise $G_R$. Since $\tau$ is colour-preserving, $\tau(sg)=s^{\pm1}\tau(g)$ for every $g \in G$ and every $s \in S$. Finally, since $\tau$ does not normalise $G_R$, we have $\tau\not\in \Aut(G)$ and the result follows from  Proposition~\ref{wreath}.
\end{proof}

\begin{cor}
Let $H$ be a nontrivial permutation group on a set $\Omega$ and let $G$ be a group. If $G=B\ltimes A$, where $A$ is  abelian of exponent greater than $2$, then $G \wr_\Omega H$ is non-CCA.
\end{cor}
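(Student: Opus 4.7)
The plan is to apply Proposition~\ref{wreath}. Since $H$ is nontrivial, it suffices to produce an inverse-closed generating set $S$ of $G$ together with a non-identity bijection $\tau\colon G\to G$ fixing $1_G$ and satisfying $\tau(sg)=s^{\pm 1}\tau(g)$ for every $g\in G$ and every $s\in S$.

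The key idea is to exploit the semidirect product structure $G=B\ltimes A$. Every element of $G$ has a unique expression $g=bc$ with $b\in B$ and $c\in A$, so I would define
\[\tau\colon bc\mapsto bc^{-1}.\]
This is an involution of $G$ (hence a bijection) that fixes $1_G$, and it is non-identity precisely because $A$ has exponent greater than $2$: any $c\in A$ with $c\neq c^{-1}$ satisfies $\tau(c)\neq c$. For the generating set, I would choose any inverse-closed generating sets $S_A$ of $A$ and $S_B$ of $B$, and take $S=S_A\cup S_B$.

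The verification of the colour-compatibility condition splits naturally according to which generating set $s$ lies in. If $s\in S_B$, then $sg=(sb)c$ and $\tau(sg)=(sb)c^{-1}=s\tau(g)$, so the sign is $+$. If $s\in S_A$, then using normality of $A$ write $sg=b(s^{b}c)$ with $s^b\in A$; since $A$ is abelian, $(s^bc)^{-1}=c^{-1}(s^b)^{-1}=(s^{-1})^{b}c^{-1}$, and so
\[\tau(sg)=b(s^bc)^{-1}=b(s^{-1})^{b}c^{-1}=s^{-1}bc^{-1}=s^{-1}\tau(g),\]
giving the sign $-$.

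There is no real obstacle here; the only choice to make is the bijection $\tau$, and once one opts to invert on the $A$-coordinate while leaving the $B$-coordinate alone, both the normality of $A$ in $G$ and the commutativity of $A$ are exactly what is needed to push an $A$-generator past the $B$-part. Proposition~\ref{wreath} then delivers the conclusion that $G\wr_\Omega H$ is non-CCA.
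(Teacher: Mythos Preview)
Your proof is correct and essentially identical to the paper's: the same bijection $\tau(bc)=bc^{-1}$ is used, and the same case split verifies the colour-compatibility condition via Proposition~\ref{wreath}. The only cosmetic difference is that the paper takes the specific generating set $S=(A\cup B)\setminus\{1_G\}$, whereas you allow arbitrary inverse-closed generating sets $S_A$, $S_B$ of $A$ and $B$.
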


\begin{proof}
Every element of $G$ can be written uniquely as $ba$ with $a\in A$ and $b\in B$. Let $\tau$ be the permutation of $G$ mapping $ba$ to $ba^{-1}$. Clearly, $\tau$ fixes $1_G$ but, since $A$ has exponent greater than $2$, $\tau$ is not the identity. Let $S=(A \cup B) -\{1_G\}$. Note that $S$ is an inverse-closed generating set for $G$. Let $g\in G$, let $s\in S$ and write $g=ba$ with $a\in A$ and $b\in B$. If $s\in B$, then $\tau(sg)=\tau(sba)=sba^{-1}=s\tau(ba)=s\tau(g)$. Otherwise, $s\in A$, $s^b\in A$ and
$$\tau(sg)=\tau(sba)=\tau(bs^ba) = b(s^ba)^{-1} = b(s^b)^{-1}a^{-1} =s^{-1}ba^{-1}=s^{-1}\tau(g).$$
The result then follows from  Proposition~\ref{wreath}, since $H$ is nontrivial.
\end{proof}

\begin{cor}\label{cor:cor}
Let $H$ be a permutation group on a set $\Omega$ and let $G$ be a group. If 
\begin{itemize}
\item $G$ has exponent greater than $2$, 
\item $H$ is nontrivial when $G$ is abelian, and
\item  $G$ has a generating set $S$ with the property that $s^g=s^{\pm1}$ for every $s\in S$ and $g\in G$, 

\end{itemize}then $G \wr_\Omega H$ is non-CCA.
\end{cor}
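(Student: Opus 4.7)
The plan is to apply Proposition \ref{wreath} with the inversion map $\tau: G \to G$ given by $\tau(g) = g^{-1}$. This is a bijection fixing $1_G$, and it is non-identity precisely because $G$ has exponent greater than $2$ (since $\tau(g) = g$ iff $g^2 = 1$). After replacing $S$ by $S \cup S^{-1}$ (which preserves both the generating property and the hypothesis $s^g \in \{s, s^{-1}\}$, as the latter is symmetric under $s \mapsto s^{-1}$), we have an inverse-closed generating set of the required form.

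The main calculation is the compatibility condition $\tau(sg) = s^{\pm 1}\tau(g)$ for all $s \in S$ and $g \in G$. Writing $s^g = s^{\epsilon}$ with $\epsilon \in \{\pm 1\}$ and rearranging gives $g^{-1}s^{-1} = s^{-\epsilon} g^{-1}$, so
$$\tau(sg) = (sg)^{-1} = g^{-1}s^{-1} = s^{-\epsilon}\tau(g),$$
which is of the required form $s^{\pm 1}\tau(g)$. The key observation is that the hypothesis on $S$ is tailored precisely to allow one to commute $s^{-1}$ past $g^{-1}$ at the cost of possibly inverting $s$; this is exactly what $\tau = {}^{-1}$ needs in order to fit the framework of Proposition \ref{wreath}.

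Finally, the second bullet of Proposition \ref{wreath} asks that either $H$ be nontrivial or $\tau \notin \Aut(G)$. Since inversion is a group automorphism of $G$ if and only if $G$ is abelian, the non-abelian case gives $\tau \notin \Aut(G)$ immediately, while in the abelian case $H$ is nontrivial by hypothesis. Either way, Proposition \ref{wreath} applies and shows that $G \wr_\Omega H$ is non-CCA. There is no real obstacle in this argument: the three hypotheses of the corollary line up exactly with the three requirements imposed on $\tau$ and $(G,H)$ by Proposition \ref{wreath}, so the only real content is recognising that inversion is the correct map.
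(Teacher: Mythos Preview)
Your proof is correct and follows essentially the same approach as the paper: both take $\tau$ to be the inversion map, verify the compatibility condition $\tau(sg)=s^{\pm1}\tau(g)$ directly from the hypothesis $s^g=s^{\pm1}$, and handle the second bullet of Proposition~\ref{wreath} by observing that inversion is a group automorphism exactly when $G$ is abelian. Your write-up is somewhat more explicit (spelling out the $\epsilon$ calculation and the passage to an inverse-closed $S$), but there is no substantive difference.
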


\begin{proof}
We can assume without loss of generality that $S$ is inverse-closed. Let $\tau$ be the permutation of $G$ that maps every element to its inverse. For every $s\in S$ and $g\in G$, we have $s^g=s^{\pm 1}$ and thus $\tau(sg)=g^{-1}s^{-1}=s^{\pm 1}g^{-1}=s^{\pm1}\tau(g)$. Since $G$ has exponent greater than $2$, $\tau$ is not the identity. If $H$ is trivial, then $G$ is non-abelian so that $\tau$ is not an automorphism of $G$. The result then follows from  Proposition~\ref{wreath}.
\end{proof}

In view of Corollary~\ref{cor:cor}, it would be interesting to determine the groups $G$ such that  $G$ has a generating set $S$ with the property that $s^g=s^{\pm1}$ for every $s\in S$ and $g\in G$. This family of groups includes abelian groups and $\Q_8$.  This family is closed under central products but it also includes examples which do not arise as central products of smaller groups in the family, for example  the extraspecial group of order $32$ and minus type.

\section{A few constructions for non-CCA graphs}\label{sec:constr}

In this section, we will describe a few constructions which yield non-CCA Cayley graphs. For a group $G$, let $\K_G$   denote $\Cay(G,G-\{1\})$, the complete Cayley graph on $G$. 
We will need a result which tells us when $  G_R < \Aut_c(\K_G)$. First we state some definitions.

\begin{defn}\label{def:dihedral}
Let $A$ be an abelian group of exponent greater than $2$, and define a map $\iota:A \to A$ by $\iota(a)=a^{-1}$ for every $a \in A$.
The \emph{generalised dihedral group} over $A$ is $\Dih(A)=A\rtimes\langle\iota\rangle$. 
\end{defn}

\begin{defn} \label{iota}
Let $A$ be an abelian group of even order and of exponent greater than $2$, and let $y$ be an element of $A$ of order $2$. The \emph{generalised dicyclic group} over $A$ is  $\Dic(A,y):=\langle A,x\mid x^2=y, a^x=a^{-1}\ \forall a \in A\rangle$. 
Let $\iota$ be the permutation of $\Dic(A,y)$ that fixes $A$ pointwise and maps every element of the coset $Ax$ to its inverse.
\end{defn}

It is not hard to check that $\iota$ is an automorphism of $\Dic(A,y)$. 

\begin{defn}\label{sigma_i}
For $\alpha\in\{i,j,k\}$, let  $S_\alpha=\{\pm \alpha\}\times \ZZ_2^n$ and let $\sigma_\alpha$ be the permutation of $\Q_8 \times \ZZ_2^n$ that inverts every element of $S_\alpha$ and fixes every other element.
\end{defn}

\begin{thm}[\cite{CompleteCCA}, Classification Theorem]\label{CompleteGraph}
If $G$ is a group, then $G_R<\Aut_c(\K_G)$ if and only if one of the following occurs.
\begin{enumerate}
\item $G$ is abelian but is not an elementary abelian $2$-group, and $\Aut_c(\K_G)=\Dih(G)$, \label{caseone}
\item $G$ is generalised dicyclic  but not of the form $\Q_8\times\ZZ_2^n$, and $\Aut_c(\K_G)=G_R\rtimes\langle\iota\rangle$, where $\iota$ is as in Definition~\ref{iota}, or \label{casetwo}
\item $G\cong\Q_8\times\ZZ_2^n$ and $\Aut_c(\K_G)= \grp{G_R, \sigma_i,\sigma_j,\sigma_k}$, where $\sigma_i, \sigma_j, \sigma_k$ are as in Definition~\ref{sigma_i}.\label{Complete-case3}
\end{enumerate}
\end{thm}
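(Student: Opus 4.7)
My plan is to handle the two directions of the biconditional separately. The backward (``if'') direction is a direct verification: in each listed case, I would check that the given maps lie in $\Aut_c(\K_G)$ but not in $G_R$. For case~(1), inversion $\iota: g\mapsto g^{-1}$ is a group automorphism of the abelian $G$, so it automatically preserves colours; it is nontrivial precisely because $G$ has exponent greater than $2$. For cases~(2) and~(3), I would verify the maps from Definitions~\ref{iota} and~\ref{sigma_i} by splitting the edges of $\K_G$ into classes (both endpoints in $A$, both in $Ax$, or mixed in the dicyclic setting; or, respectively, according to which of $\{\pm i\}$, $\{\pm j\}$, $\{\pm k\}$ the ``quaternion part'' of each endpoint lies in) and checking colour preservation within each class using the dicyclic or quaternion relations.

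For the forward direction, suppose $\alpha\in\Aut_c(\K_G)\setminus G_R$. Composing with a suitable element of $G_R$, I may assume $\alpha(1)=1$ (and the composite is still not in $G_R$). The edge $\{1,g\}$ has colour $\{g,g^{-1}\}$, which forces $\alpha(g)\in\{g,g^{-1}\}$ for every $g$. Set $F=\{g\in G:\alpha(g)=g\}$ and $I=G\setminus F$; every element of order at most $2$ lies in $F$, so each element of $I$ has order greater than $2$, and $I$ is nonempty since $\alpha\neq1$. The colour-preservation condition for the edge $\{g,h\}$ reads
\[
\alpha(h)\alpha(g)^{-1}\in\{hg^{-1},\,gh^{-1}\},
\]
and a direct case analysis on whether $g,h$ lie in $F$ or in $I$ yields two crucial constraints: (i) every element of $I$ inverts every element of $F$ by conjugation, and (ii) for any $g,h\in I$, either $g^2=h^2$ or $g$ and $h$ invert each other.

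From (i) and (ii) I would read off the group structure. If $G$ is abelian, then (i) degenerates to requiring $f^2=1$ for every $f\in F$, so $F$ is exactly the set of involutions together with $1$ and $I$ is the set of elements of order greater than $2$; hence $\alpha$ is the inversion map, and accounting for all such $\alpha$ recovers case~(1) with $\Aut_c(\K_G)=\Dih(G)$. If $G$ is non-abelian, fix any $x\in I$; by~(i), $x$ conjugation-inverts every element of $F$, which forces any two elements of $F$ to commute and hence $\langle F\rangle$ to be abelian. Condition~(ii) then controls the squares of elements of $I$: when all of them share a common square $y$, the pair $\langle F,x\rangle$ has exactly the generalised dicyclic presentation with $x^2=y$, and a short argument identifies $G$ with $\Dic(\langle F\rangle,y)$, placing us in case~(2). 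The delicate remaining scenario is when (ii) is realised through mutual inversion rather than equal squares; this is what permits multiple independent colour-preserving involutions fixing $1$, and careful bookkeeping of their fixed-point sets forces $G\cong\Q_8\times\ZZ_2^n$ with $\Aut_c(\K_G)=\langle G_R,\sigma_i,\sigma_j,\sigma_k\rangle$, which is case~(3).

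The main obstacle I anticipate is this final split between cases~(2) and~(3): one must argue that distinct colour-preserving involutions fixing $1$ (such as the three $\sigma_\alpha$) can coexist \emph{only} when $G$ has the very rigid quaternion-type structure of $\Q_8\times\ZZ_2^n$, and conversely that in that situation they generate exactly the stated group. The other pieces of the classification, in particular the precise identification of $\Aut_c(\K_G)$ in cases~(1) and~(2), become routine once (i) and (ii) are in hand.
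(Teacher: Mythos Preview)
The paper does not prove this theorem at all: it is quoted verbatim from \cite{CompleteCCA} (Byrne--Donner--Sibley) as their Classification Theorem, and the present paper uses it as a black box. So there is no ``paper's own proof'' to compare your proposal against; any comparison would have to be with the original source, not with this paper.

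That said, your outline is the natural one and is broadly in the spirit of the argument in \cite{CompleteCCA}. There is, however, a concrete slip in your constraint~(ii). For $g,h\in I$ the colour condition on the edge $\{g,h\}$ reads $h^{-1}g\in\{hg^{-1},gh^{-1}\}$, which yields either $g^2=h^2$ or $gh=hg$; it does \emph{not} yield ``$g$ and $h$ invert each other.'' This matters for your narrative of the split between cases~(2) and~(3): you describe the $\Q_8\times\ZZ_2^n$ scenario as arising ``through mutual inversion rather than equal squares,'' but in fact in all three target cases the elements of $I$ for a given $\alpha$ share a common square, and the dichotomy that actually separates case~(3) from case~(2) is the existence of several distinct $\alpha$'s (hence several distinct sets $I$), not a different branch of~(ii). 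One can recover inversion-type relations by feeding products like $gh$ back through (i) and (ii), but that is extra work beyond the single-edge analysis you describe. Fixing~(ii) and reorganising the final paragraph around ``how many independent colour-preserving involutions fix $1$'' would put your sketch on solid ground.
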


\begin{defn}
Let $B$ be a permutation group and let $G$ be a regular subgroup of $B$. We say that $(G,B)$ is a \emph{complete colour pair} if $G$ is as  in the conclusion of Theorem~\ref{CompleteGraph} and $B \le \Aut_c(\K_G)$.
\end{defn}

For a graph $\Gamma$, let $\LL(\Gamma)$ denote its line graph.

\begin{prop}\label{LineGraph}
Let $\Gamma$ be a  connected bipartite $G$-edge-regular graph. If $H$ is a group of automorphisms of $\Gamma$ such that:
\begin{itemize}
\item $G\leq H$,
\item the orbits of $H$ on the vertex-set of $\Gamma$ are exactly the biparts, and
\item  for every vertex $v$ of $\Gamma$, either 
\begin{itemize}
\item $G_v^{\Gamma(v)}=H_v^{\Gamma(v)},$ or
\item $(G_v^{\Gamma(v)},H_v^{\Gamma(v)})$ is a complete colour pair,
\end{itemize} 
\end{itemize}
then $H$ is a colour-preserving group of automorphisms of $\LL(\Gamma)$ viewed as a Cayley graph on $G$.
\end{prop}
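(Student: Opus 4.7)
The plan is to fix an edge $e_0=\{u_0,w_0\}$ with $u_0\in U$ and $w_0\in W$, and to identify $V(\LL(\Gamma))=E(\Gamma)$ with $G$ via $e_0^g\leftrightarrow g$; since $G$ is edge-regular, this yields $\LL(\Gamma)=\Cay(G,S)$ with $S=(G_{u_0}\cup G_{w_0})\setminus\{1\}$, because bipart-preservation forces $e_0^{g_1}$ and $e_0^{g_2}$ to share a vertex exactly when $g_2g_1^{-1}\in G_{u_0}\cup G_{w_0}$. Writing $\alpha_h$ for the permutation of $G$ induced by $h\in H$ on $E(\Gamma)$, the goal is to show that each $\alpha_h$ is a colour-preserving automorphism of $\LL(\Gamma)$.

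The key structural observation is that $\LL(\Gamma)$ decomposes as an edge-disjoint union of the cliques $E(v)$ for $v\in V(\Gamma)$: distinct edges of $\Gamma$ share at most one vertex, so every edge of $\LL(\Gamma)$ lies in a unique such clique. It therefore suffices to show that each $\alpha_h$ preserves colours on every clique. At $v=u_0$, the map $k\mapsto e_0^k$ is a colour-preserving bijection between $\K_{G_{u_0}}$ and the subgraph of $\LL(\Gamma)$ on $E(u_0)$, since both assign to $\{k_1,k_2\}$ the colour $\{k_2k_1^{-1},k_1k_2^{-1}\}$; and $G_{u_0}$ acts regularly on $\Gamma(u_0)$ by edge-regularity combined with bipart-preservation. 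This transports the local hypothesis at $u_0$ directly into a statement about colour-preservation of the restriction of $\alpha_h$ to the $u_0$-clique whenever $h\in H_{u_0}$.

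For a general $U$-clique $E(u)$, I would use a $G$-translation to reduce to $u_0$. Choose $g,g'\in G$ with $u_0^g=u$ and $u_0^{g'}=u^h$; then $\tilde h:=ghg'^{-1}$ satisfies $u_0^{\tilde h}=u_0$, so $\tilde h\in H_{u_0}$, and from $h=g^{-1}\tilde h g'$ we obtain $\alpha_h=\alpha_{g'}\circ\alpha_{\tilde h}\circ\alpha_{g^{-1}}$. The outer factors $\alpha_{g^{-1}}$ and $\alpha_{g'}$ are right-translations on $\Cay(G,S)$, and so colour-preserving. It remains to check that $\alpha_{\tilde h}$ is colour-preserving on the $u_0$-clique, and under the identification above this reduces to the action of $\tilde h$ on $\Gamma(u_0)$, which by the hypothesis either lies in the (colour-preserving) regular action of $G_{u_0}^{\Gamma(u_0)}$ or, by the complete colour pair condition, in $\Aut_c(\K_{G_{u_0}})$. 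The symmetric argument at $w_0$ covers $W$-cliques, and noting $G_{u_0}\cap G_{w_0}=G_{e_0}=1$ confirms that every edge of $\LL(\Gamma)$ has been accounted for.

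I expect the main obstacle to be step two: verifying cleanly that the abstract ``complete colour pair'' hypothesis on $(G_{u_0}^{\Gamma(u_0)},H_{u_0}^{\Gamma(u_0)})$, which is framed intrinsically via $\Aut_c(\K_{G_{u_0}^{\Gamma(u_0)}})$, really does translate into colour-preservation on the $u_0$-clique of $\LL(\Gamma)$, rather than into some other edge-colouring on the same abstract clique. Once that identification is nailed down, the $G$-translation in step three is essentially bookkeeping, and the union-of-cliques observation delivers the conclusion.
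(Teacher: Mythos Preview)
Your proposal is correct and follows essentially the same route as the paper's proof: both decompose the edges of $\LL(\Gamma)$ into the ``special'' cliques $E(v)$, identify the clique at a base vertex with $\K_{G_{u_0}}$ (respectively $\K_{G_{w_0}}$) so that the local hypothesis yields colour-preservation there, and then use transitivity of $G$ on each $H$-orbit of cliques to transport this to every clique. The paper phrases the reduction more abstractly (``$H_v^{\Gamma(v)}$ is permutation isomorphic to $H_C^C$, and $C$ is a complete Cayley graph on $G_C$''), while you write out the conjugation $\alpha_h=\alpha_{g'}\circ\alpha_{\tilde h}\circ\alpha_{g^{-1}}$ explicitly; the obstacle you flag is exactly what the paper handles by that permutation-isomorphism remark, and your identification $k\mapsto e_0^k$ already resolves it.
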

\begin{proof}
Since $G$ acts regularly on edges of $\Gamma$, its induced action on $\LL(\Gamma)$ is regular on vertices. Vertices of $\Gamma$ induce cliques in $\LL(\Gamma)$, which we call \emph{special}. Clearly, $H$ has exactly two orbits on special  cliques. Moreover, special cliques partition the edges of $\LL(\Gamma)$, and each vertex of $\LL(\Gamma)$ is in exactly two special cliques, one from each $H$-orbit. Since $G\leq H$, the set of edge-colours appearing in special cliques from different $H$-orbits is disjoint.

Let $v$ be a vertex of $\Gamma$ and let $C$ be the corresponding special clique of $\LL(\Gamma)$. Note that $H_v^{\Gamma(v)}$ is permutation isomorphic to $H_C^C$, while $G_v^{\Gamma(v)}\cong G_C^C\cong G_C$. Since $G$ is vertex-regular on $\LL(\Gamma)$, $G_C$ is regular on $C$ and thus $C$ can be viewed as a complete Cayley graph on $G_C$.  If $(G_v^{\Gamma(v)},H_v^{\Gamma(v)})$ is a complete colour pair, Theorem~\ref{CompleteGraph} implies that $H_C^C$ is colour-preserving.   If $G_v^{\Gamma(v)}=H_v^{\Gamma(v)},$ then since $G$ is colour-preserving, so is $H_C^C$.  Since $G$ acts transitively on the special cliques within an $H$-orbit and $G$ is colour-preserving, it follows that $H$ is colour-preserving.
\end{proof}

\begin{rem}
In the proof of Proposition~\ref{LineGraph}, we only use one direction of Theorem~\ref{CompleteGraph}, namely that if $G$ appears in Theorem~\ref{CompleteGraph}, then  $G_R<\Aut_c(\K_G)$. The converse is not used here, but it  can help to identify situations where Proposition~\ref{LineGraph} can be used to construct non-CCA graphs.
\end{rem}

\begin{eg}\label{eg:f21}
Let $\Gamma$ be the Heawood graph and let $H$ be the bipart-preserving subgroup of $\Aut(\Gamma)$. Note that $H\cong\PSL(2,7)$ and $H$ contains an edge-regular subgroup $G$ isomorphic to $\F_{21}$, the Frobenius group of order $21$. Moreover, for every vertex $v$ of $\Gamma$, we have $G_v^{\Gamma(v)}\cong\ZZ_3$ while $H_v^{\Gamma(v)}\cong\D_3$ and $(\ZZ_3,\D_3)$ is a complete colour pair. By Proposition~\ref{LineGraph}, $H$ is a colour-preserving group of automorphisms of $\LL(\Gamma)$ viewed as a Cayley graph on $G$. Since $G$ is not normal in $H$, it follows that $\LL(\Gamma)$ is a non-CCA graph and so $\F_{21}$ is a non-CCA group. 
\end{eg}

Example~\ref{eg:f21} was previously studied  in~\cite{CCASquarefree} and \cite{FirstCCA}, under a slightly different guise.

\begin{eg}
Let $A \cong \Q_8 \times \ZZ_2^m$ and $B = \Aut_c(K_A) =\langle A_R, \sigma_i, \sigma_j, \sigma_k\rangle$. Then $A$ is not normal in $B$, and by Theorem~\ref{CompleteGraph}(\ref{Complete-case3}), $(A,B)$ is a complete colour pair. Let $n=|A|$ and let $\K_{n,n}$ be the complete bipartite graph of order $2n$. Let $G= A\times A$ and let $H=B\times B$. By Proposition~\ref{LineGraph}, $H$ is a colour-preserving group of automorphisms of $\LL(\K_{n,n})$ viewed as a Cayley graph on $G$. Since $A$ is not normal in $B$, $G$ is not normal in $H$ hence $\LL(\K_{n,n})$ is a non-CCA graph and so $G$ is a non-CCA group.
\end{eg}

For a graph $\Gamma$, let $\mathcal S(\Gamma)$ denote its subdivision graph.

\begin{cor}\label{Subdivision}
Let $\Gamma$ be a  connected $G$-arc-regular graph. If $H$ is a group of automorphisms of $\Gamma$ such that:
\begin{itemize}
\item $G\leq H$, and
\item $(G_v^{\Gamma(v)},H_v^{\Gamma(v)})$ is a complete colour pair for every vertex $v$ of $\Gamma$,
\end{itemize}
then $H$ is a colour-preserving group of automorphisms of $\LL(\mathcal S(\Gamma))$ viewed as a Cayley graph on $G$.
\end{cor}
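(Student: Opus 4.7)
The plan is to deduce this corollary by applying Proposition~\ref{LineGraph} to the subdivision graph $\mathcal{S}(\Gamma)$, so the task reduces to verifying each hypothesis of that proposition for the pair $(G,H)$ acting on $\mathcal{S}(\Gamma)$.

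First, I would record the structural facts about $\mathcal{S}(\Gamma)$ that follow for free. It is connected (being a subdivision of a connected graph) and bipartite, with biparts $V(\Gamma)$ and $E(\Gamma)$; edges of $\mathcal{S}(\Gamma)$ are incidences $\{v,e\}$ with $v\in e$, and these are in natural bijection with arcs of $\Gamma$. Since $G$ is arc-regular on $\Gamma$, the induced $G$-action on $\mathcal{S}(\Gamma)$ is edge-regular, so $\mathcal{S}(\Gamma)$ is a $G$-edge-regular bipartite graph. Because $G \leq H$ permutes $V(\Gamma)$ and $E(\Gamma)$ and is transitive on each, the $H$-orbits on $V(\mathcal{S}(\Gamma))$ are exactly the two biparts, giving the second bullet of Proposition~\ref{LineGraph}.

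Next I would verify the local condition bullet-by-bullet, splitting into the two kinds of vertex of $\mathcal{S}(\Gamma)$. For $v\in V(\Gamma)$, the neighbourhood $\mathcal{S}(\Gamma)(v)$ is the set of edges of $\Gamma$ incident to $v$, which is $H_v$-equivariantly identified with $\Gamma(v)$; hence $G_v^{\mathcal{S}(\Gamma)(v)} = G_v^{\Gamma(v)}$ and $H_v^{\mathcal{S}(\Gamma)(v)} = H_v^{\Gamma(v)}$, so the hypothesis of the corollary delivers exactly the complete colour pair condition. For $e=\{v,w\}\in E(\Gamma)$, the neighbourhood $\mathcal{S}(\Gamma)(e)$ is the two-element set $\{v,w\}$. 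Arc-regularity of $G$ forces $G_e$ to have order $2$ and to swap $v$ and $w$, so $G_e^{\{v,w\}} = \Sym(\{v,w\})$; since $G \leq H$, the overgroup $H_e^{\{v,w\}}$ can only equal this same group $\Sym(\{v,w\})$. Thus $G_e^{\mathcal{S}(\Gamma)(e)} = H_e^{\mathcal{S}(\Gamma)(e)}$, which is the alternative in the ``either/or'' of Proposition~\ref{LineGraph}.

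With all hypotheses checked, Proposition~\ref{LineGraph} applies and yields that $H$ is a colour-preserving group of automorphisms of $\LL(\mathcal{S}(\Gamma))$ viewed as a Cayley graph on $G$, which is precisely the conclusion. The only subtle point here is the edge-vertex case, and it is not really an obstacle once one observes that $G$-arc-regularity determines the local action on the degree-$2$ side of the subdivision completely, leaving no room for $H$ to act differently. Everything else is bookkeeping translating between $\Gamma$, $\mathcal{S}(\Gamma)$, and $\LL(\mathcal{S}(\Gamma))$.
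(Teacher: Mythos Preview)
Your proof is correct and follows essentially the same approach as the paper: reduce to Proposition~\ref{LineGraph} applied to $\mathcal{S}(\Gamma)$, check bipartiteness and $G$-edge-regularity via the arc/edge bijection, identify the local action at original vertices with $\Gamma(v)$, and handle the degree-$2$ edge-vertices separately. If anything, your treatment of the edge-vertex case is slightly cleaner, invoking the ``$G_e^{\mathcal{S}(\Gamma)(e)} = H_e^{\mathcal{S}(\Gamma)(e)}$'' alternative of Proposition~\ref{LineGraph} rather than the paper's (mildly inaccurate) claim that $(\ZZ_2,\ZZ_2)$ is a complete colour pair.
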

\begin{proof}
Let $\Gamma'=\mathcal S(\Gamma)$. We show that Proposition~\ref{LineGraph} applies to $\Gamma'$. Clearly, $\Gamma'$ is bipartite and $G$ acts on it faithfully and edge-regularly. It is also obvious that, in its induced action on $\Gamma'$, $H$ must preserve the biparts of $\Gamma'$. Finally, let $x$ be a vertex of $\Gamma'$. If $x$ arose from a vertex $v$ of $\Gamma$, then we have that $A_v^{\Gamma(v)}$ is permutation isomorphic to $A_x^{\Gamma'(x)}$ for every $A\leq\Aut(\Gamma)$. Since $(G_v^{\Gamma(v)},H_v^{\Gamma(v)})$ is a complete colour pair, so is $(G_x^{\Gamma'(x)},H_x^{\Gamma'(x)})$. If $x$ arose from an edge of $\Gamma$, then $x$ has valency $2$ and, since $G$ is arc-transitive, $G_x^{\Gamma'(x)}=H_x^{\Gamma'(x)}\cong\ZZ_2$ and $(G_x^{\Gamma'(x)},H_x^{\Gamma'(x)})$ is a complete colour pair.
\end{proof}

\begin{eg}\label{eg:42}
Let $\Gamma$ be the Heawood graph and let $H=\Aut(\Gamma)$. Note that $H$ contains an arc-regular subgroup $G$ isomorphic to $\AGL(1,7)$. Moreover,  for every vertex $v$ of $\Gamma$, we have $G_v^{\Gamma(v)}\cong\ZZ_3$ while $H_v^{\Gamma(v)}\cong\D_3$. By Corollary~\ref{Subdivision}, $H$ is a colour-preserving group of automorphisms of $\LL(\mathcal S(\Gamma))$ viewed as a Cayley graph on $G$. Since $G$ is not normal in $H$, it follows that $\Gamma$ is a non-CCA  graph and so $\AGL(1,7)$ is a non-CCA group. 
\end{eg}

\begin{rem} In fact, $\AGL(1,7)$ is a Sylow cyclic group whose order is not divisible by  four, so Example~\ref{eg:42} will appear again in our characterisation of non-CCA groups of this sort, in Section~\ref{sec:sylow cyclic}. However, the construction we have just presented is very different from the approach we use in that section.
\end{rem}

\section{Sylow cyclic and order not divisible by four}\label{sec:sylow cyclic}
  
We first introduce some notation that will be useful throughout this section. Recall that $\PGL(2,7)$ has a unique conjugacy class of subgroups isomorphic to $\AGL(1,7)$. The intersection of such a subgroup with the socle $\PSL(2,7)$ is a Frobenius group of order $21$ which we will denote $\F_{21}$. We say that a group $G$ is \emph{Sylow cyclic} if, for every prime $p$, the Sylow $p$-subgroups of $G$ are cyclic. 

Our aim in this section is to characterise both the non-CCA Sylow cyclic groups whose order is not divisible by four, and the structure of the corresponding colour-preserving automorphism groups for non-CCA graphs.

\begin{thm}\label{thm:square-free}
Let $G$ be a Sylow cyclic group  whose order is  not divisible by four, let $\Gamma=\Cay(G,S)$, let $A$ be a colour-preserving group of automorphisms of $\Gamma$, let $R$ be a Sylow $2$-subgroup of $G$ and let $r$ be a generator of $R$. If $G$ is not normal in $A$,  then
 $G=(F\times H)\rtimes R$ and $A = (T\times J)\rtimes R$, where the following hold:

\begin{itemize}
\item[(i)] $\PSL(2,7)\cong T\trianglelefteq A$,
\item[(ii)] $T\cap G=F\cong\F_{21}$,
\item[(iii)] $J\cap G=H\trianglelefteq J\trianglelefteq A$,
\item[(iv)] $H$ is self-centralising in $J$,
\item[(v)] $J$ splits over $H$,
\item[(vi)] $H$ is normal in $A$.
\end{itemize}

\end{thm}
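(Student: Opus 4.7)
The plan is to induct on $|G|$, combining the quotient construction of \cref{QuotientGraph} with the strong restrictions on the vertex stabiliser coming from \cref{VxStab2Group,coolnew,newnewyep}.  Setting $v = 1_G$, the assumption $G \not\trianglelefteq A$ forces $A_v \neq 1$, and $A_v$ is a $2$-group.  Being Sylow cyclic with $4 \nmid |G|$, the group $G$ has a cyclic Hall $2'$-subgroup $N \trianglelefteq G$ with $G = N \rtimes R$ and $|R| \in \{1,2\}$, so the target decomposition $G = (F\times H)\rtimes R$ is really a refinement $N = F \times H$ with $|F| = 21$.

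For the inductive step I would choose a minimal normal subgroup $M$ of $A$.  In the case $M \leq G$, the subgroup $M$ is cyclic of prime order.  Since $4 \nmid |G|$, no element of $S$ has $2$-power order at least $4$, so \cref{coolnew} forces the kernel of $A$ on $M$-orbits to equal $M$.  Then \cref{QuotientGraph} provides a colour-preserving action of $A/M$ on $\Gamma/M = \Cay(G/M, S/M)$.  By induction, either $G/M \trianglelefteq A/M$ (which combined with $M \trianglelefteq A$ would contradict $G \not\trianglelefteq A$) or $A/M$ already has the stated $(T\times J)\rtimes R$ decomposition; I would lift back using Schur--Zassenhaus and the coprimality of $|M|$ with the relevant orders.

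In the case $M \not\leq G$, the stabiliser constraints from \cref{newnewyep} (forbidding $\ZZ_{2^n}$ with $n \geq 2$ and $\D_{2^n}$ with $n \geq 3$) together with the regularity of $G$ rule out all elementary abelian possibilities for $M$ (for odd $p$, $M$ would centralise $G_R$ and so lie in $G$; for $p=2$ the restrictions on $A_v$ and on $|G|_2$ give a contradiction), so $M$ is a direct product of isomorphic non-abelian simple groups.  A Sylow-cyclic argument on $M \cap G$ collapses this to a single simple factor.  Setting $T := M$, $J := C_A(T)$, $F := T \cap G$, and $H := J \cap G$, one verifies that $TJ \trianglelefteq A$ and that $A/TJ$ embeds into $\mathrm{Out}(T)$ (accounting for the outer $\rtimes R$); item (vi) follows from the intrinsic description $H = C_A(T) \cap G$; item (iv) from the observation that $C_J(H)$ centralises both $H$ and $T$ and thus sits inside $G$; and item (v) from Schur--Zassenhaus applied to the odd-order $H$ inside $J$, its quotient being a $2$-group.

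The main obstacle is the identification $T \cong \PSL(2,7)$ with $F \cong \F_{21}$, which produces the signature shape of the theorem.  For this, $T \cap G$ is a regular Sylow-cyclic subgroup of $T$ of order coprime to $4$, and the point stabiliser $T_v$ is a $2$-group compatible with \cref{newnewyep}.  Restricting to simple groups having such a regular subgroup and compatible stabiliser (either by direct case analysis via the classification of finite simple groups, or by appealing to the complete-colour-pair description of \cref{LineGraph} together with \cref{CompleteGraph}) singles out $\PSL(2,7)$ with $F \cong \F_{21}$, matching Example~\ref{eg:f21}.  The squarefree odd case treated in \cite{CCASquarefree} provides the template for this step.
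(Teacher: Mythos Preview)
Your overall inductive shape matches the paper's, but two steps have genuine gaps.

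First, when $M\le G$ has odd prime order $p$, you invoke \cref{coolnew} to force the kernel $K$ of the action of $A$ on $M$-orbits to equal $M$.  But \cref{coolnew} applies only to normal $2$-subgroups, so it says nothing here; and in fact $K$ can properly contain $M$.  The paper shows instead that $K$ acts faithfully on $[1]^M$, whence $|K_{[1]}|\le 2$ and $K\cong\ZZ_p$ or $\D_p$.  Thus the colour-preserving quotient action (via \cref{QuotientGraph}) is by $A/K$, not $A/M$, and when $K\cong\D_p$ the implication ``$GK/K\trianglelefteq A/K\Rightarrow G\trianglelefteq A$'' is no longer immediate.  The paper spends a full paragraph on this: one must show that the odd Hall subgroup $G_2$ of $G$ is characteristic in $GK$, identify the three index-$2$ subgroups of $GK$ above $G_2$, and argue via the centraliser of $N$ that none can be conjugate, forcing all three (in particular $G$) to be normal in $A$.

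Second, when $M$ is an elementary abelian $2$-group you assert a contradiction, but there is none: $A$ may have a central involution outside $G$ (e.g.\ $A\cong\PSL(2,7)\times\ZZ_2$ with $G\cong\F_{21}\times\ZZ_2$).  The paper instead shows $|\op{2}{A}|\le 2$ via \cref{coolnew} (which \emph{does} apply now), produces a complement via Gasch\"utz, and then disposes of the finitely many possibilities for $A$ by hand, using \cref{newnewyep} to eliminate $\PGL(2,7)$ and $\PGL(2,7)\times\ZZ_2$.  Relatedly, your choice $J:=\C_A(T)$ is not what the theorem asserts: in that same example it yields $J=\ZZ_2$, whereas the statement requires $J=1$ with the central $\ZZ_2$ absorbed into $R$.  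Finally, the identification $T\cong\PSL(2,7)$ needs a concrete argument: the paper uses that $A$ (hence each simple factor of $\soc(A)$) has a $2$-complement, forcing $T\cong\PSL(2,p)$ for a Mersenne prime $p$, and then applies \cref{newnewyep} to the dihedral Sylow $2$-subgroup of $T$ to exclude $p\ge 31$.
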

\begin{proof} 
To avoid ambiguity, for $g\in G$, we write $[g]$ for the vertex of $\Gamma$ corresponding to $g$ and, for $X\subseteq G$, we write $[X]$ for $\{[x]\colon x\in X\}$. 

 Let $P$ be a Sylow $2$-subgroup of $A$ containing $R$. By Lemma~\ref{VxStab2Group}, $A_{[1]}$ is a $2$-group. Up to relabelling, we may assume that $A_{[1]}\leq P$. Since $G$ is regular, we have $A = GA_{[1]}$ and $|A|=|G||A_{[1]}|$.  Note that (v) and (vi) follow from the rest of the claims. Indeed, $H$ must have odd order and, since $|A:G|$ is a power of $2$, so is $|J:H|$ and thus $J =H \rtimes (P \cap J)$. As $H$ has odd order and is normal in $J$, it must be characteristic in $J$ and thus normal in $A$.

Since $|G|$ is not divisible by $4$, it follows that $G$ has a characteristic subgroup 
$G_2$
of odd order 
such that $G=G_2\rtimes R$. By order considerations, we have $A=G_2P$.

\textbf{Case 1:} There is no minimal normal subgroup of $A$ of odd order.

In this case, we have that $\soc(A)=T_1\times\cdots \times T_k\times B$, where $\soc(A)$ is the socle of $A$,  the $T_i$s are non-abelian simple groups, and $B$ is an elementary abelian $2$-group.  Recall that $A=G_2P$, that is, $A$ has a $2$-complement.
Since this property is inherited by normal subgroups, $\soc(A)$ and $T_i$ also have $2$-complements for every $i$. This implies that, for every $i$, $T_i \cong \PSL(2,p)$ for some Mersenne prime $p$ (see~\cite[Theorem~1.3]{2Complement} for example). Now, $|T_i|$ is divisible by $3$ but the Sylow $3$-subgroup of $\soc(A)$ is cyclic (since $|A:G|$ is a power of $2$ and $G$ is Sylow cyclic)  so that $k=1$. Let $T=T_1$. Suppose that $p>7$ and hence $p\geq 31$. Note that $T_{[1]}$ has index at most $2$ in some Sylow $2$-subgroup of $T$ which is isomorphic to $\D_{(p+1)/2}$. It follows that $T_{[1]}$ has order at least $(p+1)/2$ and is either dihedral or cyclic. Since $p\geq 31$, this implies that $T_{[1]}$ contains a unique cyclic subgroup of order $(p+1)/8$, say $U$, and $U$ is contained in $(T_{[1]})^2$.
 By Lemma~\ref{newnewyep}, $U=1$, which is a contradiction. 
It follows that $p=7$ and $T\cong\PSL(2,7)$. 

Let $\op{2}{A}$ be the largest normal $2$-subgroup of $A$.  
If $\op{2}{A}=1$, then $\soc(A)=T\cong\PSL(2,7)$ and $A$ is isomorphic to one of $\PSL(2,7)$ or $\PGL(2,7)$.  If $A \cong \PSL(2,7)$, then, as $\F_{21}$ is the only proper subgroup of $\PSL(2,7)$ with index a power of $2$, $G\cong\F_{21}$ and the theorem holds. If $A \cong \PGL(2,7)$, then, for the same reason, $G$ is isomorphic to either $\F_{21}$ or $\AGL(1,7)$.
If $G\cong \F_{21}$, then $A_{[1]}$ must be a Sylow $2$-subgroup of $A$ and thus isomorphic to $\D_8$. In particular, $A_{[1]}$ contains a unique cyclic subgroup of order $4$ and this subgroup is contained in $(A_{[1]})^2$. This contradicts Lemma~\ref{newnewyep}. We must therefore  have $G\cong\AGL(1,7)$ and again the theorem holds.

 We now assume that 
$\op{2}{A} \neq 1$.
In particular, the orbits of $\op{2}{A}$ are of equal length, which is a power of $2$ greater than $1$. It follows that $|\op{2}{A}:\op{2}{A}_{[1]}|=|[1]^{\op{2}{A}}|= 2$. 
 Let $K$ be the kernel of the action of $ A$ on the $\op{2}{A}$-orbits. By Lemma \ref{coolnew}, $K$ is semiregular hence so is  $\op{2}{A}$. It follows that  $|\op{2}{A}|=2$ and $\op{2}{A}$ is central in $A$. This implies that $B=\op{2}{A}$, hence $\soc(A)=T\times \op{2}{A}$.
Now, $A_{[1]}$ is a complement for $\op{2}{A}$ in $P$, so by Gaschutz' Theorem (see for example~\cite[3.3.2]{Stellmacher}), $\op{2}{A}$ has a complement in $A$.

Clearly,  $\op{2}{A}\leq\C_A(T)$. We show that equality holds. Suppose, on the contrary, that $\op{2}{A}<\C_A(T)$. Since $\C_A(T)$ is normal in $A$, $\C_A(T)/\op{2}{A}$ must contain a minimal normal subgroup of $A/\op{2}{A}$, say $Y/\op{2}{A}$. Since $\op{2}{A}$ has a complement in $A$, $\op{2}{A}$ has a complement in $Y$, say $Z$. Thus $Y=\op{2}{A}\times Z$ and $Z$ is isomorphic to $Y/\op{2}{A}$ which is a minimal normal subgroup of $A/\op{2}{A}$ and therefore either an elementary abelian group of odd order, or a product of non-abelian simple groups. It follows that $Z$ is characteristic in $Y$ and thus normal in $A$. Since the action of $A$ by conjugation on $Z$ and on $Y/\op{2}{A}$ are equivalent, we see that $Z$ is a minimal normal subgroup of $A$. The only possibility is that $Z=T$ but, since $T$ has trivial centre, this contradicts the fact that $Z\leq Y\leq \C_A(T)$. This concludes our proof that $\C_A(T)=\op{2}{A}\cong\ZZ_2$. 

As $\op{2}{A}$ has a complement in $A$, it follows that $A$ is isomorphic to one of $\PSL(2,7)\times \ZZ_2$ or $\PGL(2,7)\times\ZZ_2$. Suppose first that $A \cong   \PSL(2,7)\times\ZZ_2$. Since $G$ has even order, is not normal in $A$ and has index a power of $2$, we must have $G\cong \F_{21}\times \ZZ_2$ and the theorem holds with $H=J=1$. Finally, suppose that  $A \cong \PGL(2,7)\times \ZZ_2$. In particular, $P=Q\times\op{2}{A}$ where $Q\cong\D_8$. Note that $|P:A_{[1]}|=2$ and $A_{[1]} \cap \op{2}{A} =1$ hence $A_{[1]} \cong P/\op{2}{A}\cong \D_8$. This contradicts Lemma~\ref{newnewyep}.

\textbf{Case 2:} There exists a minimal normal subgroup of $A$ of odd order.

Let $N$ be a minimal normal subgroup of odd order, that is, $|N|$ is a power of some odd prime $p$. Let $K$ be the kernel of the action of $A$ on the set of $N$-orbits. Since the $N$-orbits have odd size and $K_{[1]} \le A_{[1]} $ is a $2$-group, $K_{[1]}$ must fix at least one point in every $N$-orbit. 
For each $N$-orbit $B$, pick $b\in G$ such that  $K_{[1]}=K_{[b]}$ and $B=[b]^N$. Now the kernel of the action of $K$ on $[1]^N$ is 
$K_{([1]^N)}=\bigcap_{n\in N} (K_{[1]})^n = \bigcap_{n\in N} (K_{[b]})^n = K_{([b]^N)}.$
It follows that $K_{([1]^N)}$ fixes every vertex of $\Gamma$, and so $K$ acts faithfully on $[1]^N$.
Moreover, $N_{[1]}=1$ hence $K=N\rtimes K_{[1]}$.  As $|A:G|$ is a power of $2$, $G$ contains a Sylow $p$-subgroup of $A$. Since  $N$ is normal in $A$, it is contained in every Sylow subgroup of $A$, and thus $N\leq G$. We therefore have $GK=GNK_{[1]}=  GK_{[1]}$. Since $G$ is Sylow cyclic and $N$ is elementary abelian, we must have $|N|=p$.

If $K_{[1]}\neq 1$, then $K_{[1]}$ must move a neighbour of $[1]$, say $[s]$ for some non-involution $s\in S$. It follows that $K_{[s]}$ must move $[1]$, necessarily to $[s^2]$ since $K$ is colour-preserving, and thus $[s^2]\in [1]^N$. Let $C$ be the cycle containing $[1]$ with edge-label $\{s,s^{-1}\}$. We have shown that $[1],[s^2]\in [1]^N\cap C$ and hence $|[1]^N\cap C|\geq 2$. Since $[1]^N$ and $C$ are both blocks for the action of $G$, the former of prime order, it follows that $[1]^N\cap C=[1]^N$, that is $[1]^N\subseteq C$. Since $K$ acts faithfully on $[1]^N$, $K_{[1]}$ acts faithfully on $C$ and thus $|GK:G|=|K_{[1]}|\leq 2$. It follows that $G$ is normal in $GK$, $GK=G\rtimes K_{[1]}$ and either $K=N\cong\ZZ_p$ or $K\cong\D_p$.

Suppose that $GK/K$ is normal in $A / K$ and hence $GK$ is normal in $A$. We show that this implies that $G$ is normal in $A$, which is a contradiction. This is trivial if $G=GK$ hence we assume that $|GK:G|=2$ and $K\cong\D_p$. If $G$ has odd order, then it is characteristic in $GK$ and thus normal in $A$. We may thus assume that $G$ has even order. 
Recall that $G_2$ is a characteristic subgroup of index $2$ in $G$, hence $G_2$ is normal in $GK$ and, since $|GK:G_2|=4$, we have that  $G_2$ is characteristic in $GK$ and thus normal in $A$. Note that $G$ and $G_2\rtimes K_{[1]}$ both have index two in $GK$ but $G_2\rtimes K_{[1]}$ is not semiregular, hence they are not conjugate in $A$. In particular, $G_2\rtimes K_{[1]}$ and $G$ are distinct index two subgroups of $GK$ and thus $GK/G_2$ is elementary abelian of order $4$. Let $X$ be the centraliser of $N$ in $GK$. Since $N\cong\ZZ_p$, $\Aut(N)$ is cyclic hence $GK/X$ is cyclic and $X$ is not contained in $G_2$. Since $N$, $G_2$ and $GK$ are normal in $A$, so is $XG_2$. If $XG_2 = G$, then we are done. We thus assume that this is not the case. Note that $|XG_2:X|=|G_2 :\C_{G_2}(N)|$ is odd,  hence every Sylow $2$-subgroup of $XG_2$ centralises $N$. Since $K_{[1]}$ has order $2$ but does not centralise $N$, $G_2\rtimes K_{[1]}$ is not contained in $XG_2$.   We thus conclude that $G$, $G_2K_{[1]} $ and $XG_2$ are the three index two subgroups of $GK$ containing $G_2$. One of them is normal in $A$, and we have seen that  the other two are not conjugate in $A$. It follows that all three are normal in $A$. In particular, $G$ is normal in $A$, a contradiction.

We may thus assume that  $GK/K$ is not normal in $A / K$. Again, we use the bar notation with respect to the mapping $A\mapsto A/K$. By Lemma~\ref{QuotientGraph}, $\ba{A}$ is a colour-preserving group of automorphisms of $\Gamma/N$. By induction, we have $\ba{G}=(\ba{F}\times \ba{H})\rtimes \ba{D}$ and $\ba{A} = (\ba{T}\times \ba{J})\rtimes \ba{D}$, where $\overline{D}$ is a Sylow $2$-subgroup of $\overline{G}$,  $\PSL(2,7)\cong \ba{T} \trianglelefteq \ba{A}$, $\ba{T} \cap \ba{G} = \ba{F} \cong \F_{21}$, $\ba{J} \cap \ba{G} =\ba{H} \trianglelefteq \ba{J} \trianglelefteq \ba{A}$ and 
$\ba{H}$ is self-centralising in $\ba{J}$. Further, since $R \cap K = 1$ we may assume $\ba{R}=\ba{D}$.   Note that $T / \C_T(K)  \le \Aut(K)$ is soluble since $K$ is either cyclic or dihedral.  As $T/K\cong\PSL(2,7)$, it follows that $T=K\C_T(K)$ and hence $\C_T(K)/\Z(K)\cong\PSL(2,7)$. If $K \cong \D_p$, then $\Z(K) = 1$. Set $T_0=\C_T(K)$ in this case. Otherwise, $\Z(K)=N=K\cong\ZZ_p$ and, since the Schur multiplier of $\PSL(2,7)$ has order $2$, we have $\C_T(K)=N\times T_0$ for some $T_0$. In both cases, $T_0\cong\PSL(2,7)$ and, since both $T$ and $K$ are normal in $A$, so is $T_0$, which proves (i). Now, $TJ = T_0KJ = T_0 J$, both $T_0$ and $J$ are normal in $A$ and $T_0\cap J=1$ hence $A=(T_0 \times J)\rtimes R$. Since $\ba{T_0} = \ba{T}$ there is  $F_0 \leqslant T_0$ such that $\ba{F_0} = \ba{F}$. Since $F_0\cap K=1$ we have $F_0 \cong\ba{F_0}\cong \F_{21}$. Further, $F=F_0K =F_0 \times K$. Since $|GK:G|\leq 2 $, we have that $F_0 \leqslant G$ and, since $F_0$ is maximal in $T_0$, we have $T_0 \cap G = F_0$ which is (ii).

Note that $|H|$ is not divisible by $4$, hence $H=H_0\rtimes K_{[1]}$ for some characteristic subgroup $H_0$ of $H$. In particular, $\ba{H} = \ba{H_0}$. Now $GK=FHR=F_0H_0K_{[1]}R$  so $ G=F_0 H_0R = (F_0 \times H_0) \rtimes R$.  Since $H_0$ is characteristic in $H$, it is normal in $J$. Recall that $K\cap G=N\leq H_0$, hence $K\cap F_0R=1$. As  $ \ba{J}\cap \ba{F_0R}=1$, this implies that $J\cap  F_0R=1$. Since $H_0\leq J$, we have $J \cap G = H_0(J\cap  F_0R)=H_0$, which is (iii). Note that $H_0/N\cong\ba{H}$. Since $\ba{H}$ contains its centraliser in $\ba{J}$, we have $\C_J(H_0) \leqslant HK=H_0K_{[1]}$. As $N\leq H_0$ and $N$ is self-centralising in $K$, we have $\C_J(H_0)\leq H_0$, which is (iv).

This concludes the proof.
\end{proof}

We now build on the previous result and give some information about the structure of the connection set.

\begin{thm}\label{cor:graphs}
Let $G$ be a Sylow cyclic group whose order is not divisible by four, let $\Gamma=\Cay(G,S)$ be a connected non-CCA graph  and let $A=\Aut_c(\Gamma)$. Using the notation of Theorem~\ref{thm:square-free}, write $A=(T\times J) \rtimes R$ and $G=(F\times H)\rtimes R$.  Let $r$ be the generator of $R$, let $Y=S\setminus (F\cup (H\rtimes R))$ and let
$$\Gamma'=\Cay(F\rtimes R,(F\cap S)\cup \{r\}\cup \{s^2\colon s \in Y\}).$$
Then
\begin{enumerate}
\item $\Gamma'$ is connected and non-CCA,
\item $Y\subseteq \{fz\colon f \in F,z \in Hr,  |f|=3, |z|=2\}$, and
\item if $Y\neq\emptyset$, then $|R|=2$, and $T$ commutes with $R$.
\end{enumerate}
\end{thm}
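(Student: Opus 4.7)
The plan is to prove (2), (3), (1) in sequence: (2) constrains elements of $Y$; (3) uses this, combined with $Y\neq\emptyset$, to pin down $|R|$ and the $R$-action on $T$; (1) uses both to lift a non-normal regular subgroup structure from $\Gamma$ to $\Gamma'$. I adopt the notation of the proof of Theorem~\ref{thm:square-free}: $[g]$ denotes the vertex of $\Gamma$ corresponding to $g\in G$, and $[X]:=\{[x]:x\in X\}$. The $T$-orbit of $[1]$ is $[F]$ of size $|F|=21$, the $J$-orbit of $[1]$ is $[H]$ of size $|H|$; both are blocks for $A$, and the $TR$-orbit of $[1]$ is $[FR]$.

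For claim (2), given $s\in Y$ I would write uniquely $s=fz$ with $f\in F\setminus\{1\}$ and $z\in(H\rtimes R)\setminus\{1\}$, and analyse the monochromatic cycle $C_s$ through $[1]$ (of length $|s|$) together with its intersections $C_s\cap[F]$ and $C_s\cap[H]$. The main ingredients are: $F\cong\F_{21}$ has only elements of orders $1,3,7$; the stabiliser $T_{[1]}$ is a Sylow $2$-subgroup of $T$, isomorphic to $\D_4$, and is self-normalising in $T$; $H$ is self-centralising in $J$; and $A_{[1]}$ is a $2$-group (Lemma~\ref{VxStab2Group}). To rule out $|f|=7$, I examine the $T_{[1]}$-action on the $\langle f\rangle$-orbit through $[1]$ in $[F]$ and combine the self-normalisation of $T_{[1]}$ with Lemma~\ref{newnewyep} to exclude the resulting configuration. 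To rule out $z\in H$, a dual argument using $J$-orbits and the self-centralisation of $H$ in $J$ produces a contradiction with the non-CCA hypothesis. The requirement $|z|=2$ follows from the absence of $2$-power orders greater than $2$ in $G$ and the resulting cycle-length constraint.

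For claim (3), non-emptiness of $Y$ combined with (2) gives some $s=fz\in Y$ with $z\in Hr$, forcing $r\neq 1$ and hence $|R|=2$. To show $T$ commutes with $R$, $r$ induces an automorphism $\phi$ of $T\cong\PSL(2,7)$; if $\phi$ is inner, say by conjugation by $t\in T$, then $t^{-1}r\in\C_A(T)=J\times\C_R(T)$, and since $\C_T(T)=1$ this element has trivial $T$-component, forcing $t=1$ and hence $\phi=\mathrm{id}$. It remains to exclude $\phi$ outer. Assuming so, $\phi|_F$ is the non-trivial outer automorphism of $\F_{21}$, fixing one Sylow $3$-subgroup pointwise and inverting the Sylow $7$. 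Using $|z|=2$ (which gives $hh^r=1$) and direct calculation in $F$, $s^2=f\phi(f)\in F$ has order $3$, so $|s|=6$ and $C_s\cap[F]=\{[1],[s^2],[s^4]\}$. The action of $T_{[1]}$ on this three-element set must either fix $[s^2]$ (impossible since self-normalisation of $T_{[1]}$ in $T$ and $F\cap T_{[1]}=1$ force $T_{[s^2]}\neq T_{[1]}$) or swap $\{[s^2],[s^4]\}$; the latter requires subgroup structure in $T_{[1]}\cong\D_4$ that, on closer inspection, is incompatible with the overall $T_{[1]}$-action on $[F]$.

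For claim (1), connectedness of $\Gamma'$ follows from the projection $\pi\colon G\twoheadrightarrow G/H\cong F\rtimes R$: $\pi(S)$ generates $F\rtimes R$, and each element of $\pi(S)$ lies in $\langle S'\rangle$ (elements of $F\cap S$ are in $S'$; elements of $Hr\cap S$ project to $r\in S'$; and for $s=fhr\in Y$, $\pi(s)=fr=(f^2)^{-1}\cdot r=(s^2)^{-1}\cdot r$ using $f^3=1$). For non-CCA: by claim (3), $TR=T\times R$ acts colour-preservingly on $\Gamma$ and stabilises $[FR]$ setwise. Identifying $[FR]$ with $F\rtimes R$ via the regular $FR$-action, the induced permutation group of $TR$ on $F\rtimes R$ contains $(F\rtimes R)_R$ as a non-normal subgroup because $F$ is not normal in the simple group $T$. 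I would verify this induced action is colour-preserving on $\Gamma'$: colours $\{f,f^{-1}\}$ with $f\in F\cap S$ and colour $\{r\}$ are preserved because $TR$ preserves the corresponding $\Gamma$-edges within $[FR]$; colours $\{s^2,s^{-2}\}$ with $s\in Y$ correspond to pairs of vertices of $[FR]$ at cycle-distance $2$ along an $\{s,s^{-1}\}$-cycle of $\Gamma$, a relation preserved by $TR$ since $TR$ permutes such cycles setwise. Hence $TR\leqslant\Aut_c(\Gamma')$ with $(F\rtimes R)_R$ non-normal, so $\Gamma'$ is non-CCA. The most delicate steps are the case analyses in (2) (especially ruling out $|f|=7$) and the verification in (1) that the ``square'' colours $\{s^2\}$ define a well-posed, $TR$-invariant colour class even when distinct elements of $Y$ share the same $s^2$.
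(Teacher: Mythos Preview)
Your plan for (2) and (3) misses the organising observation that drives the paper's argument: the set of vertices fixed by $T_{[1]}$ is exactly $[H\rtimes R]$. (This follows because $[F]$ and $[H\rtimes R]$ are transverse block systems and $T_{[1]}$, a Sylow $2$-subgroup of $T\cong\PSL(2,7)$, is self-normalising in $T$, so it fixes no point of $[F]$ other than $[1]$.) Once one has this, for $s\in Y$ the stabiliser $T_{[1]}$ moves $[s]$, and colour-preservation forces $[s]^{T_{[1]}}=\{[s],[s^{-1}]\}$; since both lie in the $T$-orbit $[Fs]$ one gets $1\neq s^2\in F$ directly. Then $[\langle s^2\rangle]$ is a block inside $[F]$, and $\PSL(2,7)$ on $21$ points admits no block of size $7$, so $|s^2|=3$, $|s|=6$, and $[s^3]$ is fixed by $T_{[1]}$, giving $s^3\in Hr$. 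This yields (2) with $f=s^4$, $z=s^3$, and $|z|=2$ for free. By contrast, your route decomposes $s=fz$ a priori and then tries to constrain $f$ and $z$ separately; the resulting case analyses are left vague, and your stated reason for $|z|=2$ (``absence of $2$-power orders greater than $2$ in $G$ and the resulting cycle-length constraint'') does not work: for $z=hr\in Hr$ one has $z^2=hh^r\in H$ of odd order, so the $2$-part of $|z|$ is exactly $2$ regardless, and nothing you have said forces $hh^r=1$.

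For (3), your dichotomy argument on the automorphism $\phi$ of $T$ induced by $r$ has two problems. First, the claim $\C_A(T)=J\times\C_R(T)$ is exactly what fails when $\phi$ is a non-trivial inner automorphism (then $t^{-1}r\in\C_A(T)$ has non-trivial $T$-component), so your ``inner $\Rightarrow$ trivial'' step is circular. The correct reason this case does not arise is that $\phi$ must normalise $F$, and $N_{\mathrm{Inn}(T)}(F)$ corresponds to $F$ itself, which contains no involutions. Second, for the outer case the paper gives a clean block argument: $[\langle s^2,r\rangle]$ is a block of size $6$ inside $[F\rtimes R]$, but $\PGL(2,7)$ in its transitive action on $42$ points has no blocks of size $6$, so $T\rtimes R\not\cong\PGL(2,7)$ and hence $[T,R]=1$. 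Your alternative (analysing how $T_{[1]}$ permutes $\{[1],[s^2],[s^4]\}$ and declaring the swap case ``on closer inspection incompatible'') is not completed, and it is unclear how you would finish it without essentially recovering the block-of-size-$6$ obstruction. Your treatment of (1) is broadly in line with the paper's, and indeed spells out the colour-preservation check in more detail than the paper does.
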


\begin{proof}
Since $\Gamma$ is non-CCA, $G$ is not normal in $A$. This yields the conclusion of Theorem~\ref{thm:square-free}. As in the proof of that theorem,  for $g\in G$, we write $[g]$ for the vertex of $\Cay(G,S)$ corresponding to $g$ and, for $X\subseteq G$, we write $[X]$ for  $\{[x]\colon x\in X\}$. 

Let $P$ be a Sylow $2$-subgroup of $A$ containing $R$. Up to relabelling, we may assume that $A_{[1]}\leq P$ and thus $P=A_{[1]}R$. It follows that $[1]^{PH}=[H\rtimes R]$ is a block for $A$. As $T$ is normal in $A$, its orbits are also blocks. One such block is $[1]^T=[F]$.  As $[F]\cap [H\rtimes R] =[1]$, we find that the two block systems induced by $[F]$ and by $[H\rtimes R]$ are transverse.

The action of $T$ on $[F]$ is equivalent to the action of $\PSL(2,7)$ by conjugation on its $21$ Sylow $2$-subgroups. In particular, if $f\in F$ and $T_{[1]}=T_{[f]}$, then $f=1$. This observation, together with the previous paragraph, yields that the set of fixed points of $T_{[1]}$ is exactly $[H\rtimes R]$. 

We first show (2) and (3). Let $s\in Y$. Note that $[Fs]$ is the orbit of $T$ that contains $s$. Since $s\notin H\rtimes R$, $[s]$ is not fixed by $T_{[1]}$. Since $T$ is colour-preserving, we have that $[s]\neq [s^{-1}]\in [Fs]$. It follows that  $1\neq s^2\in F$. In particular, $|s^2|\in \{3,7\}$. Since $A$ is colour-preserving, the cycles coloured $\{s,s^{-1}\}$ form a block system for $A$. This means that $[\langle s^2\rangle]$ is also a block for $A$, contained in the block $[F]$. Now, $\PSL(2,7)$ on its action on $21$ points does not admit blocks of size $7$, therefore $|s^2|=3$.  Since $s \not\in F$ this implies $|s|=6$. Notice also that $[\{1,s^3\}]$ is a block of $A$. Thus, $[s^3]$ is a fixed point of $T_{[1]}$, so $[s^3] \in [H \rtimes R]$. Since $|s^3|=2$ but $|H|$ is odd, $s^3 \not\in H$ hence $|R|=2$  and $s^3 \in Hr$. Since $H$ and $F$ centralise each other and $s^3r\in H$ and $s^2\in F$,  it follows that $s^2$ commutes with $r$. Note that $[1]^P=[R]$ is a block for $A$.  Now, $[\langle s^2\rangle]$ is also a block for $A$, being a set of vertices of even distance contained in one of the monochromatic hexagons coloured $\{s,s^{-1}\}$. It follows that $[\langle s^2,r\rangle]$ is also a  block for $A$,  of size $6$ and contained in the block  $[1]^{TP}=[F\rtimes R]$.
Note that $\PGL(2,7)$ does not have blocks of size $6$ in its transitive action on $42$ points. It follows that $T\rtimes R\not\cong \PGL(2,7)$, and hence $T$ commutes with $R$.
Writing $f=s^4$ and $z=s^3$ concludes the proof of (2) and (3).

Let $\pi: G\mapsto F\rtimes R$ be the natural projection and let $s\in Y$. By the previous paragraph, we have $s^{-1}=s^2s^3=s^2hr$, where $s^2\in F$ and $h\in H$. It follows that $\pi(s^{-1})=s^{2}r$.  As $S$ is inverse-closed, we have $\pi(Y)=\{s^2r:s\in Y\}$. Since $\langle S\rangle=G$, we  have $F\rtimes R=\langle \pi(S)\rangle\leq \langle F\cap S,r, s^2r \colon s\in Y\rangle=\langle F\cap S,r, s^2 \colon s\in Y\rangle$ and thus $\Gamma'$ is connected. Note that $[1]^{T\rtimes R}=[F\rtimes R]$ hence  $T\rtimes R\leq \Aut_c(\Gamma')$. Since $F\rtimes R$ is not normal in $T\rtimes R$, $\Gamma'$ is not CCA.
\end{proof}

The following result is, in some sense, a converse to Corollary~\ref{cor:graphs}. 

\begin{prop}
Let $G$ be a Sylow cyclic group whose order is not divisible by four such that $G=(F \times H) \rtimes R$ where $F \cong \F_{21}$, $R$ is a Sylow $2$-subgroup of $G$, and $F$ and $H$ are normal in $G$. Let $r$ be the generator of $R$, let $S$ be a generating set for $G$, let $Y=S\setminus (F\cup (H\rtimes R))$, let $S'=(F\cap S)\cup \{r\}\cup \{s^2\colon s \in Y\}$, and let
$$\Gamma'=\Cay(F\rtimes R,S').$$
If 
\begin{enumerate}
\item $\Gamma'$ is connected and non-CCA,
\item $Y\subseteq \{fz\colon f \in F,z \in Hr,  |f|=3, |z|=2\}$, and
\item if $Y\neq\emptyset$, then $|R|=2$, and $F$ commutes with $R$,
\end{enumerate}
then $\Cay(G,S)$ is connected and non-CCA.
\end{prop}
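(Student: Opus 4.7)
The connectedness of $\Gamma=\Cay(G,S)$ is automatic since $S$ generates $G$. My plan is to build a colour-preserving automorphism of $\Gamma$ that fails to normalise $G_R$ by lifting such a witness from $\Gamma'$. The crucial structural observation is that $H$ is normal in $G$ and $F\rtimes R$ is a complement: since $H\cap F=1$ and $H\cap R=1$ with $H$ of odd order, one checks $H\cap(F\rtimes R)=1$ by an order argument and thus $G=H\rtimes(F\rtimes R)$, so every $g\in G$ factors uniquely as $g=hx$ with $h\in H$ and $x\in F\rtimes R$.

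By hypothesis~(1), after composing with a right-translation one may choose $\alpha'\in\Aut_c(\Gamma')$ fixing $1$ with $\alpha'\notin\Aut_{\pm 1}(F\rtimes R, S')$. I would then define $\alpha\colon G\to G$ by $\alpha(hx):=h\cdot\alpha'(x)$ and verify that $\alpha$ preserves the edge-colouring of $\Gamma$ by cases on $s\in S$. For $s\in F\cap S$, the element $s$ centralises $H$, so $sg=h(sx)$ and colour-preservation reduces at once to that of $\alpha'$ on the edge labelled $s\in S'$. For $s\in(H\rtimes R)\cap S$, write $s=h_0r^j$ and move $r^j$ past $h$ via $r^jh=h^{r^j}r^j$; this gives $sg=(h_0h^{r^j})(r^jx)$, and the required identity $\alpha'(r^jx)=r^j\alpha'(x)$ follows because $r\in S'$ has order $2$, absorbing the $\pm$ ambiguity.

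The delicate case is $s\in Y$, where hypothesis~(2) writes $s=fz$ with $f\in F$ of order $3$ and $z\in Hr$ of order $2$, and hypothesis~(3) supplies $|R|=2$ and $[F,R]=1$. Note that $s^2=f^2\in S'$, and since $S'$ is inverse-closed we also get $f\in S'$. A two-step colour-preservation argument in $\Gamma'$ along the path $x\to rx\to frx$ yields $\alpha'(frx)=f^{\pm 1}r\alpha'(x)$, and pushing this through the commutation identities between $F$, $H$ and $R$ delivers $\alpha(sg)\in\{s,s^{-1}\}\alpha(g)$. Finally, to see that $\alpha$ does not normalise $G_R$: since $\alpha(1)=1$ and $\alpha$ maps $F\rtimes R$ into itself with $\alpha|_{F\rtimes R}=\alpha'$, were $\alpha$ a group automorphism of $G$ then $\alpha'$ would be a group automorphism of $F\rtimes R$, and colour-preservation at each $s'\in S'$ would place $\alpha'$ in $\Aut_{\pm 1}(F\rtimes R, S')$, contradicting our choice. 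The main obstacle is the $s\in Y$ case; it is the only place where hypotheses (2)--(3) are used essentially, and it requires chaining colour-preservation over two distinct edges of $S'$.
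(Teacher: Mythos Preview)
Your proposal is correct and follows essentially the same strategy as the paper: factor $G$ as $H\cdot(F\rtimes R)$, lift a colour-preserving witness from $\Gamma'$ coordinate-wise along this factorisation, and verify colour-preservation on $\Cay(G,S)$ by splitting into the three cases $s\in F\cap S$, $s\in H\rtimes R$, and $s\in Y$. The case analysis is nearly identical, with the paper writing elements as $xh$ rather than $hx$ and handling the $Y$-case via $s^2$ and $s^3$ instead of $f$ and $r$ separately.

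The one substantive difference is in the choice of witness. The paper invokes Theorem~\ref{thm:square-free} applied to $\Gamma'$ to obtain a full group $T\cong\PSL(2,7)$ of colour-preserving automorphisms with $F\le T$, lifts all of $T$, and then deduces non-CCA from the simplicity of $T$ (so $T\cap G$ cannot be normal in $T$). You instead take a single $\alpha'\in\Aut_c(\Gamma')$ fixing $1$ with $\alpha'\notin\Aut_{\pm1}(F\rtimes R,S')$, lift it, and argue directly that the lift cannot be a group automorphism of $G$. Your route is more elementary and self-contained, since it uses only the definition of non-CCA and avoids back-referencing the structure theorem; the paper's route is slightly cleaner at the end, since non-normality is immediate from simplicity rather than requiring the restriction argument.
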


\begin{proof} 
Since $S$ generates $G$, $\Cay(G,S)$ is connected. Since $\Gamma'$  is a connected and non-CCA Cayley graph on $F\rtimes R$, it follows  from Theorem~\ref{thm:square-free} that there exists a group $T\rtimes R$ of colour-preserving automorphisms of $\Gamma'$, with $F\leq T$ and $T \cong \PSL(2,7)$.

This yields an action of $T$ on $F\rtimes R$. We extend this action to  the vertex-set of $\Cay(G,S)$ in the following way: for $t\in T$ and $xh\in G$, with $x\in F\rtimes R$ and $h\in H$, let $(xh)^t=x^th$.

Notice that if $x\in F\rtimes R$, then, since $r \in S'$ is an involution and $T \rtimes R$ is colour-preserving on $\Gamma'$, for any $t \in T$ we have $(rx)^t=rx^t$.

Note that $F\leq T\cap G < T$. Since $T$ is simple, it follows that $T\cap G$ is not normal in $T$. We claim that $T$ is a colour-preserving group of automorphisms of $\Cay(G,S)$. By the previous comment, this will show that $\Cay(G,S)$ is non-CCA.

Let $t\in T$, let $v\in G$ and write $v=xh$ with $x\in F\rtimes R$ and $h\in H$. We will show that, for all $s\in S$, we have $(sv)^t=s^{\pm 1}v^t$.

Suppose first that $s\in S'$. (This includes the case when $s\in F$.) Since $T$ is colour-preserving on $\Gamma'$, we have $(sx)^t=s^{\pm 1}x^t$. Since $sx\in F\rtimes R$, we have $(sv)^t=(sxh)^t=(sx)^th=s^{\pm 1}x^th=s^{\pm 1}v^t$, as required.

Suppose next that $s \in H \rtimes R$. Write $s=h'r^i$ and $x=r^jf$, where $h'\in H$, $f\in F$ and $i,j\in \ZZ$. Let $h'' \in H$ be such that $r^{i+j}h''=h'r^{i+j}$. Then 
\begin{align*}
(sv)^t&=(h'r^{i+j}fh)^t=(r^{i+j}h''fh)^t=(r^{i+j}fh''h)^t=(r^{i+j}f)^th''h=r^{i+j}f^th''h\\
&=r^{i+j}h''f^th=h'r^{i+j}f^th=sr^jf^th=s(r^jf)^th=sx^th=sv^t,
\end{align*}
as desired.

Finally, suppose $s \in Y$. We can write $s=fz$ where $f \in F$, $z \in Hr$,  $|f|=3$ and $|z|=2$. By (3), we have $s^3=z$, $s^2=f^2$ and $|s|=6$.  Since $s^3 \in H \rtimes R$, the argument of the previous paragraph shows $(s^3v)^t=s^3v^t$. On the other hand, since $s^2\in S'$, we have $(s^3v)^t=(s^2(sv))^t=s^{\pm2} (sv)^t$. Combining these gives $(sv)^t=s^{3\pm 2}v^t=s^{\pm 1} v^t$, as desired.
\end{proof}

We view Theorem 5.2 as a  reduction of the CCA problem for groups of the kind appearing in its statement to the determination of non-CCA graphs on $\F_{21}$ and $\AGL(1,7)$.
It therefore becomes of significant interest to understand the structure of such graphs.

Let $x$ and $y$ be elements of order $7$ and $6$ in $\AGL(1,7)$, respectively,  and let $d=(y^3)^x$. Note that $\langle x,y^2\rangle=\F_{21}$. Let
$$S_{21}=\{y^{\pm 2}, (xy^2)^{\pm1}\},~~S_{42,1}=\{y^{\pm 2},d\} \textrm{ and } S_{42,2}=\{y^{\pm 2},(y^{\pm 2})^{d},d\}.$$

Note that $\Cay(\F_{21},S_{21})$ is isomorphic to the line graph of the Heawood graph (see Example~\ref{eg:f21}), while $\Cay(\AGL(1,7),S_{42,1})$ is isomorphic to the line graph of the subdivision of the Heawood graph (see Example~\ref{eg:42}). 

\begin{prop}\label{prop:non-CCA}\label{prop:non-CCA-F21xC2}\mbox{}
\begin{enumerate}
\item The graph $\Cay(\F_{21},S)$ is connected but not CCA if and only if $S$ is conjugate in $\AGL(1,7)$ to $S_{21}$. 
\item The graph $\Cay(\AGL(1,7),S)$ is connected but not CCA if and only if $S$ is conjugate  in $\AGL(1,7)$ to one of $S_{42,1}$ or $S_{42,2}$.
\item The graph $\Cay(\F_{21}\times \ZZ_2,S)$ is connected but not CCA if and only if $S$ is conjugate in 
$\AGL(1,7) \times \ZZ_2$
 to some inverse-closed subset of 
$$\{y^{\pm 2},(xy^2)^{\pm 1}, y^{\pm 2}r, (xy^2)^{\pm 1}r, r\}$$
that generates $\F_{21}\times \ZZ_2$, where $\ZZ_2=\langle r \rangle$. 
\end{enumerate}
\end{prop}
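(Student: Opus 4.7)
Our approach is case-by-case: for each part, we first apply Theorem~\ref{thm:square-free} to constrain the colour-preserving automorphism group $A = \Aut_c(\Gamma)$, then apply Theorem~\ref{cor:graphs} and an orbit analysis of the vertex stabiliser $A_{[1]}$ to determine the possible connection sets, and finally rely on the previous examples for the ``if'' direction.

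For the ``if'' direction, part~(1) is Example~\ref{eg:f21} and the $S_{42,1}$ case of (2) is Example~\ref{eg:42}. For $S_{42,2}$, we observe that $(y^{\pm 2})^d$ is the image of $y^{\pm 2}$ under conjugation by the involution $d$, which lies in $N_{\PGL(2,7)}(A_{[1]})$; this conjugation permutes the colour classes of $\Cay(\AGL(1,7), S_{42,1})$, so the same $\PGL(2,7)$-action extends to a colour-preserving action on the enlarged graph $\Cay(\AGL(1,7), S_{42,2})$. For part~(3), the central role of $\ZZ_2$ in $\F_{21}\times\ZZ_2$ allows the colour-preserving automorphism from Example~\ref{eg:f21} to be extended trivially on the $\ZZ_2$ factor to a colour-preserving automorphism of $\Cay(\F_{21}\times\ZZ_2, S)$ for every listed generating~$S$.

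For the ``only if'' direction, Theorem~\ref{thm:square-free} determines $A$: case~(1) has $H=R=1$, forcing $J=1$ and $A=T=\PSL(2,7)$; case~(2) has $R=\ZZ_2$ acting non-trivially on $F$, forcing $A=\PGL(2,7)$ (since $\PSL(2,7)\times\ZZ_2$ contains $\F_{21}\times\ZZ_2$ rather than $\AGL(1,7)$ as a regular subgroup); case~(3) dually gives $A=\PSL(2,7)\times\ZZ_2$. By Lemma~\ref{newnewyep}, $A_{[1]}\cong\D_4$ in each case. Identifying the $\PSL(2,7)$-action on $21$ cosets of $A_{[1]}$ with the action on flags of the Fano plane, the $\D_4$-orbits on $\F_{21}\setminus\{1\}$ have sizes $2,2,4,4,8$; only the two size-$2$ orbits---the inverse pairs $\{u^{\pm 1}\}$ and $\{u'^{\pm 1}\}$ corresponding to the two Sylow $3$-subgroups of $\F_{21}$ incident (at the point and at the line, respectively) to the base flag---can appear as colour classes in $S$.

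In case~(1), generation forces $S$ to contain both pairs, giving $S_{21}$ up to $\AGL(1,7)$-conjugation (using $2$-transitivity of $\AGL(1,7)$ on the seven Sylow $3$-subgroups of $\F_{21}$). In case~(2), Theorem~\ref{cor:graphs} together with the non-commutativity of $T$ and $R$ gives $Y=\emptyset$, so $S\subseteq\F_{21}\cup\{r\}$ with $r\in S$; the singleton colour $\{r\}$ must be $A_{[1]}$-fixed, forcing $r\in N(A_{[1]})\cap G$, and case analysis on the number of order-$3$ inverse pairs in $S$ yields exactly the two conjugacy classes $S_{42,1}$ and $S_{42,2}$. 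In case~(3), the centrality of $R$ produces a ``parallel'' orbit structure on $\F_{21}r$ mirroring the one on $\F_{21}$, yielding five candidate colour classes---the two pairs in $\F_{21}$, their two shifted pairs in $\F_{21}r$, and $\{r\}$---whose generating inverse-closed unions form the listed family. The main obstacle will be the careful orbit bookkeeping in case~(3), where the various ``twisted'' choices of $A_{[1]}\leq\PSL(2,7)\times\ZZ_2$ (arising from the four homomorphisms $\D_4\to\ZZ_2$) must be checked not to produce additional conjugacy classes of non-CCA connection sets beyond those in the statement.
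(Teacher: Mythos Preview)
The paper's proof of this proposition is a computer verification in \textsc{Magma} (with a citation to \cite{CCASquarefree} for part~(1)); it contains no theoretical argument at all. Your proposal is therefore a genuinely different route: you attempt to \emph{derive} the classification from Theorems~\ref{thm:square-free} and~\ref{cor:graphs} together with a suborbit analysis of $A_{[1]}$ on the coset space. This is more explanatory than a machine check, and your identification of the flag action of $\PSL(2,7)$ on $21$ points with suborbit sizes $1,2,2,4,4,8$ is correct and is exactly the mechanism that forces $S$ into the stated forms in part~(1).

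That said, your plan has a couple of soft spots you should tighten before calling it a proof. First, the sentence ``By Lemma~\ref{newnewyep}, $A_{[1]}\cong\D_4$ in each case'' is not quite right in case~(3): inside $P=\D_4\times\ZZ_2$ the lemma alone does not exclude $A_{[1]}\cong\ZZ_4\times\ZZ_2$ or $\ZZ_2^3$; you also need $A_{[1]}\cap G=1$ (i.e.\ $A_{[1]}$ misses the central $\ZZ_2$) to pin down $A_{[1]}$ as one of the four $\D_4$'s. Second, in case~(2) you should say explicitly why $S$ contains at most one involution (equivalently, bound the number of $A_{[1]}$-fixed vertices in $G$ via $|N_A(A_{[1]})|$), since your ``$S\subseteq \F_{21}\cup\{r\}$'' step silently assumes this. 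Finally, the acknowledged ``main obstacle'' in case~(3)---checking that the three non-trivially twisted copies of $\D_4$ in $\D_4\times\ZZ_2$ do not yield new conjugacy classes of admissible $S$---is real and will require either an explicit normaliser computation in $\PSL(2,7)\times\ZZ_2$ or an argument that the outer $\ZZ_2$ of $\AGL(1,7)\times\ZZ_2$ already conjugates these twisted stabilisers to the untwisted one. None of this looks fatal, but it is precisely the kind of bookkeeping the authors chose to offload to \textsc{Magma}.
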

\begin{proof}
This was verified using \textsc{magma}~\cite{magma}. The proof of the first claim can also be found in~\cite[Proposition~2.5, Remark~2.6]{CCASquarefree}. 
\end{proof}

\begin{rem}
It can be checked that Proposition~\ref{prop:non-CCA-F21xC2} (3) yields eleven generating sets for $\F_{21}\times \ZZ_2$, up to conjugacy in $\AGL(1,7) \times \ZZ_2$.
\end{rem}

\noindent\textsc{Acknowledgements.}
We would like to thank Gordon Royle for his help with some of the computations. We would also like to thank the anonymous referee for a number of helpful suggestions.

\end{document}